\documentclass[12pt]{amsart}
\usepackage{latexsym, amssymb, amsmath}

\newtheorem{theorem}{Theorem}[section]
\newtheorem{lemma}[theorem]{Lemma}
\newtheorem{corollary}[theorem]{Corollary}
\newtheorem{proposition}[theorem]{Proposition}
\newtheorem{remark}[theorem]{Remark}
\theoremstyle{definition}
\newtheorem{definition}[theorem]{Definition}

\title[4-dim pseudo-umbilical biharmonic submanifolds]
{Four-dimensional pseudo-umbilical biharmonic submanifolds in six-dimensional Riemannian manifolds of constant sectional curvature}

\author{Shun Maeta}

\keywords{biharmonic submanifolds, Chen's conjecture, $\lambda$-biminimal submanifolds, pseudo-umbilical submanifolds}
\subjclass[2020]{53C42, 58E20, 53B25}

\address{\footnotesize{
Department of Mathematics, Chiba University, 1-33, Yayoicho, Inage, Chiba, 263-8522, Japan.
}
 }
\footnotesize{
\email{shun.maeta@faculty.gs.chiba-u.jp}
}

\begin{document} 
\begin{abstract} 
In 1989, Dimitri\'c showed that any pseudo-umbilical biharmonic submanifold in a Euclidean space is minimal when the dimension of the submanifold is different from four. However, the four-dimensional case has remained open for nearly four decades.
This difficulty comes from the fact that the tangential part of the biharmonic equation vanishes identically in dimension four. 
Therefore, we consider biminimal submanifolds, whose defining equation generalizes the normal part of the biharmonic equation.
In this paper, we show that any connected pseudo-umbilical 0-biminimal submanifold of codimension two and dimension at least three in a Riemannian manifold of constant sectional curvature $c$ has constant mean curvature. Furthermore, when $c$ is non-positive, we prove that it is minimal. As a corollary, we show that any four-dimensional pseudo-umbilical biharmonic submanifold in a six-dimensional Riemannian manifold of non-positive constant sectional curvature is minimal. These results also provide partial affirmative answers to Chen's conjecture, generalized Chen's conjecture for non-positive constant sectional curvature, and the Balmu\c{s}-Montaldo-Oniciuc conjecture.
\end{abstract}
\maketitle 
\markboth{4-dim pseudo-umbilical biharmonic submanifolds} 
{Shun Maeta}


\section{Introduction}\label{intro}

Biharmonic submanifolds in a Euclidean space $\mathbb{E}^n$ are submanifolds with harmonic mean curvature vector $\Delta {\bf H}=0$ in $\mathbb{E}^n$ \cite{Chen1988}, where $\Delta$ and ${\bf H}$ denote the (non-positive) Laplace operator 
and the mean curvature vector field of the submanifold $M^m$, respectively. 
Therefore biharmonic submanifolds generalize minimal submanifolds.
Interestingly, biharmonic submanifolds can also be studied through the variational theory of biharmonic maps \cite{EL1983} (see also \cite{Jiang1986}).
The theory of biharmonic submanifolds has developed rapidly, in particular, from the perspective of nonexistence theory (see the survey \cite{Chen2025}). 
In 1988, Chen posed the following conjecture ``Any biharmonic submanifold in $\mathbb{E}^n$ is minimal". This is known as Chen's conjecture \cite{Chen1988}. 

There are many partial affirmative answers to this conjecture for hypersurfaces.
Chen and Jiang independently proved that every biharmonic surface in $\mathbb{E}^3$ is minimal \cite{Chen1988,Jiang1987}. 
Hasanis and Vlachos \cite{HV1995} and Defever \cite{Defever1998} proved that every biharmonic hypersurface in $\mathbb{E}^4$ is minimal. 
Recently, Fu, Hong, and Zhan proved Chen's conjecture for biharmonic hypersurfaces in $\mathbb{E}^5$ \cite{FHZ2021} and in $\mathbb{E}^6$ \cite{FHZ2023}. 

However, much less is known in higher codimension. Akutagawa and the author showed that any properly immersed biharmonic submanifold in $\mathbb{E}^n$ is minimal (cf. \cite{AM2013}, see also \cite{Maeta2014}). 
Under the assumption that the normalized mean curvature vector field is parallel, any biharmonic surface in $\mathbb{E}^n$ is minimal (cf. \cite{Chen2019}, see also \cite{MU2013}, \cite{ST2018}). Recently, the author also showed that any simple rotational biharmonic surface in $\mathbb{E}^4$ is minimal \cite{Maeta2026}.

Chen's conjecture was also studied for geometrically natural submanifolds by Dimitri\'c. He showed that (1) any biharmonic curve is an open part of a straight line, (2) any biharmonic submanifold with constant mean curvature is minimal, and (3) any totally umbilical biharmonic submanifold is minimal \cite{Dimitric1989}. 
Therefore, the next natural class is pseudo-umbilical submanifolds, which generalize totally umbilical submanifolds by requiring umbilicity only in the direction of the mean curvature vector. 
This condition singles out a normal direction determined by the immersion itself, while allowing the second fundamental form to vary in the remaining normal directions.
Pseudo-umbilical submanifolds and related classes of submanifolds have been studied extensively (cf. \cite{Chen1971b}, \cite{Chen1976}, \cite{CL1972}, \cite{Fetcu2012}, \cite{Smyth1973}, \cite{YI1971}, and \cite{Yau1974}).
In 1989, Dimitri\'c showed that every pseudo-umbilical biharmonic submanifold $M^m$ in $\mathbb{E}^n$ is minimal when $m\neq4$ \cite{Dimitric1989}.
However, the four-dimensional case has remained open for nearly four decades.
Analogous results hold in general space forms. 
For $m\neq4$, Balmu\c{s}, Montaldo, and Oniciuc proved in 2008 that pseudo-umbilical biharmonic submanifolds in spheres have constant mean curvature (cf. \cite{BMO2008}, see also \cite{LO2016}), while Caddeo, Montaldo, and Oniciuc proved in 2002 that pseudo-umbilical biharmonic submanifolds in hyperbolic spaces are minimal (cf. \cite{CMO2002}).

The exceptional role of dimension four can be seen directly from the biharmonic equation.
Let $\{e_1,\cdots,e_m\}$ be a local orthonormal frame and set $f=|{\bf H}|$. 
It is well known that the necessary and sufficient condition for a submanifold $M^m$ in a Riemannian manifold $N^n(c)$ of constant sectional curvature $c$ to be biharmonic can be decomposed into normal and tangential parts (cf.~\cite{Chen-Ishikawa-1}, \cite{Chen-Ishikawa-2}, \cite{CMO2002}):
\[
\begin{cases} 
\ \ \Delta^{\perp} {\bf H} - \sum_{i=1}^m B(A_{\bf H}e_i, e_i) +mc{\bf H}= 0, \\ 
\ \ m\,\nabla |{\bf H}|^2 + 4\sum_{i=1}^mA_{\nabla^{\perp}_{e_i} {\bf H}}e_i =0 , \\  
\end{cases} 
\]
where $\Delta^{\perp}$ is the (non-positive) Laplace operator associated with the normal connection $\nabla^{\perp}$, $B$ is the second fundamental form of $M$ in $N^n(c)$, $A_\xi$ is the Weingarten map with respect to $\xi$, and $\nabla |{\bf H}|^2$ is the gradient of $|{\bf H}|^2$.
If $M$ is pseudo-umbilical, then 
\[
A_{\bf H}=f^2 {\rm Id}.
\]
Hence a straightforward computation using the Codazzi equation gives 
\[
\sum_{i=1}^mA_{\nabla^\perp_{e_i}{\bf H}}e_i=\frac{2-m}{2} \nabla f^2
\]
(cf. \cite{BMO2013}).
Consequently, the tangential biharmonic equation becomes 
\[
(m-4)\nabla f^2=0.
\]
Therefore, for $m\neq4$, $f$ is constant. 
The situation changes completely when $m=4$. 
In this case, the tangential equation vanishes identically and hence  imposes no restriction on $\nabla f$. 
Therefore, the rigidity problem is transferred to the normal part of the biharmonic equation.
This is the main difficulty in the exceptional case.
Furthermore, since it has been shown that four-dimensional biharmonic hypersurfaces in $\mathbb{E}^5$ are minimal, the first remaining case for four-dimensional pseudo-umbilical biharmonic submanifolds is codimension two.
In this paper, we overcome these difficulties in codimension two and prove a stronger statement than is required for the biharmonic problem.
\begin{remark}
A situation similar to that of pseudo-umbilical submanifolds occurs for hypersurfaces. 
If $M$ is a hypersurface in $N^n(c)$, then the tangential part can be rewritten in a more manageable form, which can be used effectively to facilitate the proof of constancy of mean curvature. 
However, in higher codimension, this simplification cannot be used, making the problem considerably more difficult.
Hence studying the normal equation alone is also important for progress towards a resolution of Chen's conjecture. 
\end{remark}
Loubeau and Montaldo introduced $\lambda$-biminimal submanifolds \cite{LM2008}. 
Inoguchi and Lee classified 0-biminimal curves in two-dimensional space forms \cite{IL2012}. 
Sasahara classified $0$-biminimal Lagrangian surfaces in complex space forms under the condition that the integral curves of the Maslov vector field are geodesics  \cite{Sasahara2010}. 
For $\lambda\geq0$, the author classified properly immersed $\lambda$-biminimal submanifolds in $\mathbb{E}^n$ (cf. \cite{Maeta2012}, see also \cite{Maeta2014}).

For $\lambda\in\mathbb{R}$, a submanifold  $M^m$ in $N^n(c)$ is $\lambda$-biminimal if and only if 
\begin{align*}
\Delta^\perp{\bf H}-\sum_{i=1}^mB(A_{\bf H}e_i,e_i)+mc{\bf H}=\lambda{\bf H}.
\end{align*}
For $\lambda=0$, this is precisely the normal part of the biharmonic equation, with no tangential equation imposed. 
Allowing $\lambda\neq0$ gives a further generalization.

We first consider pseudo-umbilical $\lambda$-biminimal submanifolds of codimension two in $\mathbb{E}^n$. 
The tangential equation $(m-4)\nabla f^2=0$, which yields constancy of $f$ when $m\neq4$ in the biharmonic case, cannot be used for general $\lambda$-biminimal submanifolds.
We nevertheless prove that every such connected $\lambda$-biminimal submanifold $M^m$ with $m\geq3$ has constant mean curvature. 
Furthermore, when $\lambda\geq0$, we show that the submanifold is minimal. 
When $\lambda<0$ and $M$ is not minimal in $\mathbb{E}^{m+2}$, then the submanifold is minimal in some hypersphere.
\begin{theorem}\label{main}
Let $M^m$ $(m\geq3)$ be a connected pseudo-umbilical $\lambda$-biminimal submanifold in $\mathbb{E}^{m+2}$. Then the mean curvature $|{\bf H}|$ is constant.
Moreover, if $\lambda\geq0$, then $M$ is minimal.
If $\lambda<0$ and $M$ is not minimal, then 
$|{\bf H}|^2=-\frac{\lambda}{m}$.
In this case, there exists $c\in\mathbb{E}^{m+2}$ such that $M$ is minimal in the hypersphere 
\[
\mathbb{S}^{m+1}
\Big(
c,\sqrt{-\frac{m}{\lambda}}
\Big)\subset \mathbb{E}^{m+2},
\]
where $c$ is the center and $\sqrt{-\frac{m}{\lambda}}$ is the radius of the hypersphere.
\end{theorem}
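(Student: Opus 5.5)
The plan is to work in an adapted normal frame on the open set $U=\{f>0\}$ and read off everything from the Codazzi and Ricci equations together with the two components of the $\lambda$-biminimal equation. On $U$ set $e_{m+1}={\bf H}/f$ and let $\xi=e_{m+2}$ be the unit normal completing the frame. Write $\nabla^\perp_X e_{m+1}=\omega(X)\xi$ and $\nabla^\perp_X\xi=-\omega(X)e_{m+1}$ for a $1$-form $\omega$, and let $\omega^\sharp$ be its dual vector. Pseudo-umbilicity gives $A_{m+1}=f\,{\rm Id}$. Since ${\bf H}$ is parallel to $e_{m+1}$, we have ${\rm tr}A_\xi=0$.

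\textbf{Step 1: the $\lambda$-biminimal equation.} A direct computation gives $\Delta^\perp{\bf H}=(\Delta f-f|\omega|^2)e_{m+1}+(2\langle\omega,\nabla f\rangle+f\,{\rm div}\,\omega)\xi$ and $\sum_i B(A_{\bf H}e_i,e_i)=mf^3e_{m+1}$. The equation therefore splits into
\[
\Delta f-f|\omega|^2-mf^3+mcf=\lambda f,\qquad {\rm div}(f^2\omega)=0 ,
\]
with $c=0$ in $\mathbb{E}^{m+2}$. The Ricci equation gives $d\omega(X,Y)\xi=R^\perp(X,Y)e_{m+1}$, and this vanishes because $[A_{m+1},A_\xi]=0$. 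Hence $\omega$ is closed.

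\textbf{Step 2: the Codazzi equation for $A_{m+1}$.} Since the ambient space has constant curvature, this equation reads
\[
X(f)Y-Y(f)X=\omega(X)A_\xi Y-\omega(Y)A_\xi X .
\]
Tracing in $Y$ yields $A_\xi\omega^\sharp=(1-m)\nabla f$.

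Since $m\ge3$, we can take independent $X,Y\perp\omega^\sharp$. The right side then vanishes, so $X(f)=0$ for all $X\perp\omega^\sharp$, i.e. $\nabla f=g\,\omega^\sharp$. In particular $\nabla f=0$ wherever $\omega=0$.

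Where $\omega\ne0$, put $Y=\omega^\sharp/|\omega|$ and take $X\perp\omega^\sharp$. This gives $A_\xi X=\mu X$ with $\mu=Y(f)/|\omega|=g$. Tracelessness forces the eigenvalue $-(m-1)\mu$ on $\omega^\sharp$, which is consistent with $A_\xi\omega^\sharp=(1-m)\nabla f$. So $A_\xi$ is completely determined by $\nabla f=\mu\,\omega^\sharp$.

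\textbf{Step 3 (main obstacle): show $\mu\equiv0$.} This is the step I expect to be hardest. I would first write out the Codazzi equation for $A_\xi$, namely $(\nabla_XA_\xi)Y-(\nabla_YA_\xi)X=-f\omega(X)Y+f\omega(Y)X$. Applying it to pairs $X,Y$ drawn from $\omega^\perp$ and from the direction $\omega^\sharp/|\omega|$ should give:
\begin{itemize}
\item the derivative of $\mu$ along $\omega^\perp$;
\item the fact that $\omega^\perp$ is umbilical, with a mean-curvature term involving $\mu$;
\item a first-order relation linking $\omega^\sharp(\mu)$, $f|\omega|$ and $\mu$.
\end{itemize}
I would then combine these with $d\omega=0$, $\nabla f=\mu\omega^\sharp$ and ${\rm div}(f^2\omega)=0$, aiming to force $\mu=0$. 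If that fails, I would bring in the Gauss equation, comparing sectional curvatures of planes inside $\omega^\perp$ with those of mixed planes.

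\textbf{Step 4: $f$ is constant, so $\omega=0$.} Once $\nabla f=0$ on $U$, each component of $U$ has constant $f>0$, so $U$ is closed; by connectedness either $f\equiv0$ or $f$ is a positive constant.

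In the latter case $\mu=0$ gives $A_\xi=0$. The Codazzi equation for $A_\xi$ then reduces to $f(\omega(Y)X-\omega(X)Y)=0$, so $\omega=0$ and ${\bf H}$ is parallel.

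\textbf{Step 5: conclusion.} With $\omega=0$ and $c=0$, the $e_{m+1}$-component of Step 1 becomes $-mf^3=\lambda f$, i.e. $f^2=-\lambda/m$. This is impossible if $\lambda\ge0$ and $f>0$, so in that case $M$ is minimal.

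For $\lambda<0$ and $M$ not minimal, let $x$ be the position vector. Then
\[
X\bigl(x+{\bf H}/f^2\bigr)=X-A_{\bf H}X/f^2+\nabla^\perp_X{\bf H}/f^2=0 .
\]
So $c=x+{\bf H}/f^2$ is constant, and $|x-c|=1/f=\sqrt{-m/\lambda}$. Moreover, the mean curvature vector of $M$ in $\mathbb{E}^{m+2}$ is ${\bf H}=-f^2(x-c)$, which is exactly the mean curvature vector of the geodesic sphere. Hence the mean curvature of $M$ inside $\mathbb{S}^{m+1}(c,\sqrt{-m/\lambda})$ vanishes, and $M$ is minimal there.
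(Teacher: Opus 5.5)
\textbf{Gap in Step 3.} Your Steps 1, 2, 4 and 5 are correct. Steps 4--5 are a good self-contained replacement for the paper's appeal to Chen's theorem: you get $\omega=0$ directly from the Codazzi equation for $A_\xi$ once $A_\xi=0$, and then exhibit the center $x+{\bf H}/f^2$ by hand. But Step 3 is the entire content of the theorem, and it is only a plan.

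The Codazzi consequences you list are right. In the paper's notation your eigenvalue $\mu$ is $b$ and your $\omega$ is $\alpha=a\,ds$. These consequences are exactly Lemmas \ref{l6.1} and \ref{l6.2}: $Xb=0$ on $\mathcal D$, $\nabla_Ye_1=\sigma Y$, $\nabla_{e_1}e_1=0$, and $e_1b+m\sigma b=-fa$. Together with $d\omega=0$ they give a local warped product $ds^2+\varphi(s)^2g_L$ in which $f,a,b,\sigma$ depend only on $s$.

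Your first attempt cannot succeed. Consider the equations $f'=ab$, $b'=-m\sigma b-fa$, ${\rm div}(f^2\omega)=0$ (i.e.\ $a'=-(m-1)\sigma a-2aba/f$), and the $e_{m+1}$-component of the biminimal equation. Eliminating $f''$ yields the relation \eqref{46}, which is linear in $\sigma$ with coefficient $-mf'/f\neq0$, so it merely determines $\sigma$. Then $(f,a,b)$ satisfy a closed first-order ODE system with local solutions for arbitrary initial data. Hence no contradiction can come from these equations alone. The Gauss equation for the mixed curvature, $c+f^2-(m-1)b^2=K(e_1,e_\gamma)=-(\sigma'+\sigma^2)$, is indispensable, not a fallback: it is what makes the system overdetermined.

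Even with the Gauss equation, ruling out solutions is the bulk of the paper and is not a short computation. The paper passes to $y=1+f^2/b^2$, $t=f^2/a^2$, $z=\sigma f/(ab)$ with parameter $\rho=\ln f$, obtaining a polynomial vector field. It turns the compatibility between the Gauss ODE and \eqref{46} into a polynomial constraint $F_0=0$. In the variables $u=mz/y$, $v=m^2t/y$ this constraint is linear in $v$, so $v$ can be eliminated. Differentiating twice along the flow gives $P_{0,m}(y,u)=P_{1,m}(y,u)=0$.

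The paper then shows ${\rm Res}_u(P_{0,m},P_{1,m})$ is a nonzero polynomial by checking coprimality of the specializations at $y=1$. Hence $y$ is constant, which forces $u=-1$ and $y=3m/(m-1)$, and then $F_m=0$ gives $v<0$, a contradiction. For $m=4$ the two polynomials share the factor $u+2$. This factor must be discarded, using $F_4(y,-2,v)=-(y+2)v\neq0$, before the resultant argument works.

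Your proposal contains neither this elimination nor any substitute for it. So the constancy of $|{\bf H}|$, on which your Steps 4 and 5 rest, is not established.
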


For Riemannian manifolds of constant sectional curvature, we prove the following result for pseudo-umbilical $0$-biminimal submanifolds of codimension two.

\begin{theorem}\label{sub}
Let $M^m$ $(m\geq3)$ be a connected pseudo-umbilical 0-biminimal submanifold in $N^{m+2}(c)$. Then $|{\bf H}|$ is constant.
Moreover, if $c\leq0$, then $M$ is minimal.
If $M$ is not minimal, then 
$\nabla^\perp{\bf H}=0$ and $|{\bf H}|^2=c>0$.
In the unit sphere, every such non-minimal submanifold is minimal in a small hypersphere $\mathbb{S}^{m+1}\left(\frac{1}{\sqrt{2}}\right)\subset \mathbb{S}^{m+2}$.
\end{theorem}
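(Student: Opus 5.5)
The plan is to work on the open set $U=\{f>0\}$ and derive enough structure there to force $\nabla f=0$. At points of $U$ we use an adapted normal frame: $\xi={\bf H}/f$ and a unit normal $\eta\perp\xi$. The normal connection is then encoded by a $1$-form $\omega$ with
\[
\nabla^\perp\xi=\omega\otimes\eta,\qquad \nabla^\perp\eta=-\omega\otimes\xi .
\]
Pseudo-umbilicity gives $A_\xi=f\,{\rm Id}$. Since $m{\bf H}=\sum_i B(e_i,e_i)$, we also have ${\rm tr}\,A_\eta=0$. Writing ${\bf H}=f\xi$, a direct computation gives
\[
\Delta^\perp{\bf H}=(\Delta f-f|\omega|^2)\xi+(2\langle\nabla f,\omega\rangle+f\,{\rm div}\,\omega)\eta,
\qquad
\sum_i B(A_{\bf H}e_i,e_i)=mf^3\xi .
\]
Hence the $0$-biminimal equation splits into two scalar equations:
\[
\Delta f-f|\omega|^2-mf^3+mcf=0,\qquad {\rm div}(f^2\omega)=0 .
\]
In addition, the Ricci equation gives $R^\perp(X,Y)\xi$ proportional to $[A_\xi,A_\eta]=0$, so the normal bundle is flat and $d\omega=0$.

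Next we extract algebraic information from the Codazzi equation for $\xi$, which on $U$ reads
\[
X(f)Y-\omega(X)A_\eta Y=Y(f)X-\omega(Y)A_\eta X .
\]
Tracing in $Y$ gives $(m-1)\nabla f=-A_\eta\omega^\sharp$, so $\omega=0$ forces $\nabla f=0$ at that point. Where $\omega\neq0$, we use $m\ge3$ to pick two independent vectors $X,Y\perp\omega^\sharp$. The identity then shows $X(f)=0$ for all $X\perp\omega^\sharp$, so $\nabla f=a\,\omega^\sharp$ for some function $a$. Taking instead $X\perp\omega^\sharp$ and $Y=\omega^\sharp$ yields $A_\eta X=aX$. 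With $e=\omega^\sharp/|\omega|$, this means
\[
A_\eta=a\,({\rm Id}-m\,e\otimes e).
\]
This is consistent with ${\rm tr}\,A_\eta=0$ and with the traced identity. Thus the whole second fundamental form is controlled by the three functions $f$, $a$, $|\omega|$ and the unit field $e$. Note also that $df=a\,\omega$ together with $d\omega=0$ gives $da\wedge\omega=0$, so $\nabla a\parallel e$ as well.

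The main obstacle is to show that $\omega\equiv0$ on $U$. I would argue by contradiction on the open set $V=\{f\ne0,\ \omega\ne0\}$, using the remaining equations there.
\begin{itemize}
\item The Codazzi equation for $\eta$, namely $(\nabla_XA_\eta)Y-(\nabla_YA_\eta)X=f\big(\omega(Y)X-\omega(X)Y\big)$, applied to the explicit form of $A_\eta$. With $X,Y\perp e$ it should force the distribution $e^\perp$ to be umbilical, with a controlled connection form. With $Y=e$ it should give first-order equations for $e(a)$ in terms of $a$, $f$, $|\omega|$ and the geodesic curvature of the $e$-lines.
\item The Gauss equation, which computes the sectional curvatures $K(X,Y)=c+f^2+a^2$ for $X,Y\perp e$ and $K(X,e)=c+f^2-(m-1)a^2$.
\item The two normal equations above.
\end{itemize}
Since everything depends only on the arclength along the $e$-lines, this should reduce to an overdetermined ODE system. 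I expect its compatibility conditions to yield an algebraic relation forcing $a=0$, and then $\omega=0$ via $(m-1)\nabla f=-A_\eta\omega^\sharp$ and $A_\eta\neq0$ when $\omega\neq0$. This contradicts the definition of $V$. This is where the real work lies, and I would first try to differentiate the relation ${\rm div}(f^2\omega)=0$ along $e$ and compare with the $\eta$-Codazzi equations.

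Once $\omega=0$ on $U$, we get $\nabla f=0$ there. By connectedness and continuity, $f$ is constant on $M$, and $U$ is either empty or all of $M$. In the non-minimal case $\nabla^\perp{\bf H}=0$, and the $\xi$-equation reduces to $mf^3=mcf$, i.e. $|{\bf H}|^2=c$. This is impossible when $c\le0$, so $M$ is minimal in that case. In the unit sphere, a parallel mean curvature vector with $|{\bf H}|=1$ and $A_{\bf H}={\rm Id}$ yields, by the standard reduction argument, that $M$ lies minimally in the small hypersphere $\mathbb{S}^{m+1}(1/\sqrt2)$.
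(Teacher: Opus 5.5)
Your setup is correct and is essentially the paper's Section 2 and Proposition 3.1. This includes the frame $\{\xi,\eta\}$, $d\omega=0$, the two scalar equations, and the Codazzi consequences $\nabla f=a\,\omega^\sharp$ and $A_\eta=a(\mathrm{Id}-m\,e\otimes e)$; your $a$ and $|\omega|$ are the paper's $b$ and $|a|$. Your closing paragraph is also fine. Once $f$ is a positive constant, the $\eta$-Codazzi equation with $A_\eta=0$ kills $\omega$, and the $\xi$-equation gives $f^2=c$.

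The gap is the step you yourself call the main obstacle. You never show that the system on $\{f\neq0,\ \omega\neq0\}$ is inconsistent; you only expect it, and that is the entire content of the theorem. The $\eta$-Codazzi equation gives the structure $g=ds^2+\varphi(s)^2g_L$ with $\nabla_e e=0$ and $\nabla_Y e=\sigma Y$ on $e^\perp$, which you also only assert. You are then left with four functions $f,a,|\omega|,\sigma$ of $s$. They satisfy four first-order ODEs, coming from $df=a\,\omega$, the divergence equation, the $\eta$-Codazzi equation and the Gauss equation. They also satisfy one algebraic constraint, obtained by inserting these ODEs into the second-order $\xi$-equation; this, not differentiating ${\rm div}(f^2\omega)=0$, is where the new information sits. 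Nothing guarantees a priori that this overdetermined system is incompatible. Deciding it is a genuine elimination problem, and the contradiction in the paper is not an identity forcing $a=0$ but a sign contradiction.

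The paper resolves it as follows.
\begin{enumerate}
\item It passes to scale-invariant variables $y=1+f^2/a^2$ (your notation), $u$, $v$, with parameter $\rho=\ln f$.
\item It eliminates the constants via $\ell=\mu/f^2$, $\ell'=-2\ell$ and $\kappa=c/f^2=-\ell/m$, which yields a polynomial relation $F_m(y,u,v)=0$.
\item After solving for $v$, it differentiates along the flow twice to get $P_{0,m}(y,u)=P_{1,m}(y,u)=0$.
\item It checks that ${\rm Res}_u(P_{0,m},P_{1,m})$ does not vanish at $y=1$, so $y$ is constant.
\item Constant $y$ forces $u=-1$ and $y=3m/(m-1)$, and then $F_m$ gives $v<0$, contradicting $v>0$.
\end{enumerate}

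Your plan is also blind to dimension four, the case the theorem is really about. For $m=4$, $P_{0,4}$ and $P_{1,4}$ share the factor $u+2$, so their resultant vanishes identically and the straightforward elimination yields nothing. The paper must first exclude $u=-2$ separately, via $F_4(y,-2,v)=-3v\neq0$, and then run the resultant argument on the cofactors $\mathcal{E}_0,\mathcal{E}_1$. Without an explicit elimination of this kind, your argument stops exactly where the difficulty begins.
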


The theorems show that, in codimension two and dimension at least three, the normal part of the biharmonic equation alone forces a pseudo-umbilical submanifold to be minimal in Euclidean or hyperbolic space, and to have constant mean curvature in a sphere.

Furthermore, as a corollary, we obtain a partial affirmative answer to Chen's conjecture. 

\begin{corollary}\label{cfour}
Every pseudo-umbilical biharmonic submanifold of dimension four in $\mathbb{E}^6$
 is minimal.
\end{corollary}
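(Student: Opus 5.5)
The corollary follows by combining Theorem \ref{main} (or Theorem \ref{sub} with $c=0$) with the observation that a biharmonic submanifold is $0$-biminimal. Let $M^4$ be a pseudo-umbilical biharmonic submanifold of $\mathbb{E}^6$. We may assume $M$ is connected, since otherwise we apply the argument to each connected component. The biharmonic equation splits into a normal part and a tangential part, as recalled in the introduction. With $c=0$ the normal part reads
\[
\Delta^{\perp}{\bf H}-\sum_{i=1}^4 B(A_{\bf H}e_i,e_i)=0 .
\]
This is exactly the $\lambda$-biminimal equation with $\lambda=0$. Hence $M$ is a connected pseudo-umbilical $0$-biminimal submanifold of codimension two in $\mathbb{E}^{4+2}$, with $m=4\geq 3$.

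Theorem \ref{main} with $\lambda=0\geq 0$ then says that $M$ is minimal. Equivalently, Theorem \ref{sub} with $c=0\leq 0$ gives the same conclusion. The tangential equation plays no role here. It is in any case trivially satisfied, because for pseudo-umbilical submanifolds it reduces to $(m-4)\nabla f^2=0$.

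There is no real obstacle: all the difficulty sits in Theorem \ref{main}. The only points to check are these:
\begin{itemize}
\item the sign convention for $\lambda$ in the biminimal equation matches the normal biharmonic equation, so that biharmonic really does give $\lambda=0$ and not some other value;
\item the dimensional hypotheses $m\geq 3$ and codimension two are met, which they are since $m=4$ and $n=6$.
\end{itemize}

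For context, Theorem \ref{main} would itself be proved in two stages. First one writes ${\bf H}=f\xi$ with $\xi$ a unit normal field and $\eta$ the complementary unit normal. One then uses pseudo-umbilicity ($A_\xi=f\,\mathrm{Id}$) together with the Codazzi and Ricci equations to express $\nabla^\perp\xi$ and $A_\eta$ in terms of $\nabla f$. Finally one shows that the $\eta$-component of the biminimal equation forces $\nabla f=0$ wherever $f\neq0$. That part is the delicate step, and it is where the dimension restriction $m\geq 3$ is used.
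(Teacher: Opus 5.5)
Your argument is correct and is the paper's own route: the normal part of the biharmonic equation is exactly the $0$-biminimal equation, so Theorem \ref{main} with $\lambda=0$ (equivalently Theorem \ref{sub} with $c=0$), applied on each connected component with $m=4\geq 3$ and codimension two, gives minimality. Your closing sketch of how Theorem \ref{main} is proved is only loosely accurate: the paper uses both normal components \eqref{nsn} and \eqref{nst}, the Gauss equation on a local warped product, and a resultant argument, not the $\nu$-component alone. This plays no role in the corollary.
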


Moreover, we also obtain a partial affirmative answer to generalized Chen's conjecture in constant non-positive sectional curvature ``Any biharmonic submanifold in a Riemannian manifold of constant non-positive sectional curvature is minimal", and to the Balmu\c{s}-Montaldo-Oniciuc conjecture ``Any biharmonic submanifold in $\mathbb{S}^{n}$ has constant mean curvature" (cf. \cite{BMO2008}). 

\begin{corollary}\label{cfourhyp}
Every pseudo-umbilical biharmonic submanifold of dimension four in a six-dimensional Riemannian manifold of constant non-positive sectional curvature is minimal.
\end{corollary}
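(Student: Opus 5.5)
The corollary follows from Theorem \ref{sub} applied with $m=4$. Let $M^4$ be a pseudo-umbilical biharmonic submanifold in a six-dimensional Riemannian manifold $N^6(c)$ of constant sectional curvature $c\leq 0$.

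First, I will check that $M$ is $0$-biminimal. The biharmonic condition splits into a normal part and a tangential part, as recalled in the introduction. The normal part,
\[
\Delta^\perp{\bf H}-\sum_{i=1}^4B(A_{\bf H}e_i,e_i)+4c{\bf H}=0,
\]
is exactly the $\lambda$-biminimal equation with $\lambda=0$. The tangential part is not needed here; for a pseudo-umbilical submanifold with $m=4$ it is in fact vacuous, since it reduces to $(m-4)\nabla f^2=0$.

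Second, I will check the remaining hypotheses of Theorem \ref{sub}. The codimension is $6-4=2$ and $m=4\geq 3$. If $M$ is not connected, I apply the argument to each connected component separately; each component is again pseudo-umbilical and $0$-biminimal.

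Theorem \ref{sub} then gives that each component is minimal when $c\leq 0$. This can also be read off from its last assertion: a non-minimal component would have to satisfy $|{\bf H}|^2=c>0$, which contradicts $c\leq0$. Hence $M$ is minimal. The case $c=0$ recovers Corollary \ref{cfour}.

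I expect no real obstacle here, since all the substantial work is contained in Theorem \ref{sub}. The only points to watch are the reduction from biharmonic to $0$-biminimal and the restriction to connected components.
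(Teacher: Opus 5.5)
Your proposal is correct and follows the paper's route. The normal part of the biharmonic equation is exactly the $0$-biminimal equation, and the tangential part is vacuous when $m=4$. So Theorem \ref{sub}, applied with $m=4$ and codimension two on each connected component, gives minimality for $c\leq 0$, which is how the paper obtains this corollary.
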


\begin{corollary}\label{cfoursph}
Every pseudo-umbilical biharmonic submanifold of dimension four in $\mathbb{S}^6$
 has constant mean curvature on each connected component.
\end{corollary}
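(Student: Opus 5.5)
Corollary \ref{cfoursph} will be deduced from Theorem \ref{sub}, applied on each connected component. Let $M^4$ be a pseudo-umbilical biharmonic submanifold of $\mathbb{S}^6$ and let $M_0$ be one of its connected components. The biharmonic condition splits into a normal part and a tangential part, as in the decomposition recalled in the introduction. The normal part, with $c=1$ and $m=4$, reads
\[
\Delta^{\perp}{\bf H}-\sum_{i=1}^4 B(A_{\bf H}e_i,e_i)+4{\bf H}=0 .
\]
This is precisely the $0$-biminimal equation in $N^{6}(1)=\mathbb{S}^6$. Hence $M_0$ is a connected pseudo-umbilical $0$-biminimal submanifold of codimension two, of dimension $m=4\geq 3$, in a space of constant sectional curvature $c=1$.

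Theorem \ref{sub} then applies directly and shows that $|{\bf H}|$ is constant on $M_0$. The tangential part of the biharmonic equation is never used; in dimension four it holds automatically, since $(m-4)\nabla f^2=0$. Running this argument on each component gives the corollary. As a by-product, Theorem \ref{sub} tells us more on a non-minimal component: there $\nabla^\perp{\bf H}=0$ and $|{\bf H}|^2=1$, so the component is minimal in a small hypersphere $\mathbb{S}^{5}(1/\sqrt2)$.

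The only points needing care are, first, that connectedness is a hypothesis of Theorem \ref{sub}, which is why the conclusion is stated componentwise; and second, that the sign conventions for $\Delta^\perp$ and for the term $mc{\bf H}$ in the biharmonic decomposition agree with the definition of $\lambda$-biminimal used in the paper with $\lambda=0$. Both are immediate from the formulas displayed in the introduction, so I expect no real obstacle. All the substantive work lies in Theorem \ref{sub}, which I assume here.
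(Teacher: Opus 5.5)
Your proposal is correct and follows the paper's route. In $\mathbb{S}^6$ the normal part of the biharmonic equation is exactly the $0$-biminimal equation with $m=4$, $c=1$, so Theorem \ref{sub}, applied to each connected component (since connectedness is a hypothesis there), gives constancy of $|{\bf H}|$; the tangential part plays no role, just as in the paper.
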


Combining Corollary~\ref{cfour} with Dimitri\'c's result for dimensions other than four, we also conclude that every pseudo-umbilical biharmonic submanifold in $\mathbb{E}^6$ is minimal.
Combining Corollary~\ref{cfour}, Corollary~\ref{cfourhyp}, Dimitri\'c's result for dimensions other than four, and Caddeo, Montaldo, and Oniciuc's result for dimensions other than four, we also show that every pseudo-umbilical biharmonic submanifold in a six-dimensional Riemannian manifold of constant non-positive sectional curvature is minimal.
Moreover, Corollary \ref{cfoursph} and Balmu\c{s}, Montaldo, and Oniciuc's result for dimensions other than four prove that every pseudo-umbilical biharmonic submanifold in $\mathbb{S}^6$ has constant mean curvature on each connected component.

The proofs of Theorems \ref{main} and \ref{sub} are inspired by the work of Brinkmann \cite{Brinkmann1925} and Tashiro \cite{Tashiro1965} (see also pages 129 and 212 in \cite{Petersen2016}, \cite{Maeta2021}).
The idea is as follows. On the open set where $f>0$ and $df\neq0$, the assumption of codimension two allows us to choose an adapted local orthonormal frame $\{\xi=\frac{\bf H}{f},\nu\}$ of the normal bundle.
The Gauss, Codazzi, and Ricci equations impose a strong restriction on the shape operator $A_\nu$. 
These equations show that the local metric is a warped product.
By the $\lambda$-biminimal equation, we have ODEs and a polynomial relation. 
Finally, using the properties of the resultant, we conclude that the mean curvature is constant.
Once the constancy of mean curvature has been obtained, a classical theorem of Chen \cite{Chen1971a,Chen1971b} and the $\lambda$-biminimal equation complete the proof of Theorem \ref{main}.  
Theorem \ref{sub} follows from the corresponding constant mean curvature results in a Riemannian manifold of constant sectional curvature by Oniciuc (cf. \cite{Oniciuc2002}, \cite{Oniciuc2003}, see also \cite{BMO2008}).

\section{Preliminaries}

Let ${\bf x}:M^m\rightarrow N^n(c)$ be an $m$-dimensional submanifold in a Riemannian manifold $N^n(c)$ of constant sectional curvature $c$, and let $g$ be the induced metric on $M$.
For any vector fields $X,Y\in \mathfrak{X}(M)$ and a normal vector field $\xi$, the Gauss and Weingarten formulas are given by
\[
\begin{aligned}
& \bar \nabla_XY=\nabla _XY+B(X,Y),\\
& \bar \nabla _X\xi =-A_\xi X+\nabla^\perp_X\xi,
\end{aligned}
\]
where $\nabla$ and $\bar\nabla$ are the Levi-Civita connections of $M$ and $N^n(c)$, respectively.
It is well known that $A$ and $B$ are related by
\[
\langle B(X,Y),\xi\rangle =\langle A_\xi X,Y \rangle.
\]
The mean curvature vector is defined by
\[
{\bf H}=\frac{1}{m}\sum_{i=1}^mB(e_i,e_i).
\]

From now on, assume that $m\geq3$, $n=m+2$, and $M$ is pseudo-umbilical.
Set $f=|{\bf H}|$ and define
\[
U=
\left\{
p\in M ~\mid~ f(p)>0 \quad \text{and} \quad df_p\neq0
\right\}.
\]
We will prove $U$ is empty. 

Assume that $U$ is nonempty. 
Unless otherwise stated, the following calculations are carried out on sufficiently small open subsets of $U$. 
Since $M$ is pseudo-umbilical, $A_{\bf H}X=|{\bf H}|^2X$.
Set $\xi=\frac{\bf H}{f}$. We have
\begin{equation}\label{Axi}
A_{\xi}X=f X.
\end{equation}
Let $\{\xi,\nu\}$ be a local orthonormal frame of the normal bundle.
 Since
 \begin{align*}
 {\rm tr}A_\nu
 =\sum_{i=1}^m\langle B(e_i,e_i),\nu\rangle 
 =\langle m{\bf H},\nu \rangle 
 =\langle mf\xi,\nu \rangle =0,
  \end{align*}
$A_\nu$ is trace-free.
By a direct calculation, we can write
\begin{align}\label{alpha}
\nabla^\perp_X\xi=\alpha(X)\nu,\\
\nabla^\perp_X\nu=-\alpha(X)\xi,
\end{align}   
for some 1-form $\alpha$.
Hence the second fundamental form can be written 
\begin{equation}\label{2nd}
B(X,Y)=f\langle X,Y\rangle \xi+\langle A_\nu X,Y\rangle \nu.
\end{equation}

\begin{lemma}
The 1-form $\alpha$ is closed.
\end{lemma}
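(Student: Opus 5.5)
The plan is to read $d\alpha$ off the normal curvature tensor and then use the Ricci equation, whose right-hand side vanishes because $A_\xi$ is a multiple of the identity.

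First, compute the normal curvature
\[
R^\perp(X,Y)\xi=\nabla^\perp_X\nabla^\perp_Y\xi-\nabla^\perp_Y\nabla^\perp_X\xi-\nabla^\perp_{[X,Y]}\xi
\]
using \eqref{alpha} and the companion formula $\nabla^\perp_X\nu=-\alpha(X)\xi$. Expanding $\nabla^\perp_X(\alpha(Y)\nu)$ gives $X(\alpha(Y))\nu-\alpha(X)\alpha(Y)\xi$. The $\xi$-components of the two second-order terms cancel, since $\alpha(X)\alpha(Y)-\alpha(Y)\alpha(X)=0$. What remains is
\[
R^\perp(X,Y)\xi=\bigl(X(\alpha(Y))-Y(\alpha(X))-\alpha([X,Y])\bigr)\nu=d\alpha(X,Y)\,\nu .
\]
Hence $d\alpha(X,Y)=\langle R^\perp(X,Y)\xi,\nu\rangle$, and it suffices to show that this normal curvature component vanishes.

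Second, apply the Ricci equation. In the ambient $N^{m+2}(c)$ the curvature is
\[
\bar R(X,Y)Z=c\bigl(\langle Y,Z\rangle X-\langle X,Z\rangle Y\bigr),
\]
so $\langle \bar R(X,Y)\xi,\nu\rangle=0$ for tangent $X,Y$ and normal $\xi,\nu$. The Ricci equation therefore reduces to
\[
\langle R^\perp(X,Y)\xi,\nu\rangle=\pm\langle [A_\xi,A_\nu]X,Y\rangle ,
\]
where the sign depends on the convention and plays no role. By \eqref{Axi}, $A_\xi=f\,\mathrm{Id}$, which commutes with $A_\nu$, so $[A_\xi,A_\nu]=0$. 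Thus $d\alpha=0$ on the open set where the frame $\{\xi,\nu\}$ is defined.

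There is no real obstacle here. The only care needed is in the bookkeeping of the $\xi$-components in the first step, and in noting that the ambient normal curvature term vanishes because the ambient space has constant sectional curvature. Pseudo-umbilicity, in the form \eqref{Axi}, is exactly what kills the commutator.
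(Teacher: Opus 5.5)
Your proposal is correct and follows the paper's proof: you identify $R^\perp(X,Y)\xi=d\alpha(X,Y)\,\nu$ from the normal connection formulas, then use the Ricci equation with $A_\xi=f\,\mathrm{Id}$ to kill the commutator. You are slightly more explicit than the paper in two places: you note that the $\xi$-components cancel, and that the ambient curvature term $\langle \bar R(X,Y)\xi,\nu\rangle$ vanishes in $N^{m+2}(c)$.
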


\begin{proof}
By \eqref{alpha} and the definition of the normal curvature, we have
\begin{align*}
R^{\perp}(X,Y)\xi
&=\nabla^\perp_X\nabla^\perp_Y\xi-\nabla^\perp_Y\nabla^\perp_X\xi-\nabla^\perp_{[X,Y]}\xi\\
&=\{X(\alpha(Y))-Y(\alpha(X))-\alpha([X,Y])\}\nu\\
&=d\alpha(X,Y)\nu.
\end{align*}
By the Ricci equation, one has
\begin{align*}
d\alpha(X,Y)
&=\langle R^\perp (X,Y)\xi,\nu \rangle \\
&=\langle [A_\xi,A_\nu]X,Y\rangle \\
&=\langle [f {\rm Id},A_\nu]X,Y\rangle =0,
\end{align*}
where the third equality follows from \eqref{Axi}.
\end{proof}

By the Codazzi equation, we obtain the following lemma.

\begin{lemma}
For any smooth vector fields $X,Y$ on $U$,
\begin{equation}\label{xicomp}
X(f)Y-Y(f)X=\alpha(X)A_\nu Y-\alpha(Y)A_\nu X,
\end{equation}
\begin{equation}\label{nucomp}
(\nabla_XA_\nu)Y-(\nabla_YA_\nu)X=f\{\alpha(Y)X-\alpha(X)Y\}.
\end{equation}
\end{lemma}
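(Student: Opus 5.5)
We derive both identities from the Codazzi equation of $N^{m+2}(c)$. Since the ambient space has constant sectional curvature, its curvature tensor $\bar R(X,Y)Z$ is tangent to $M$ for tangent $X,Y,Z$. Consequently
\[
(\bar\nabla_X B)(Y,Z)=(\bar\nabla_Y B)(X,Z),
\]
where $(\bar\nabla_XB)(Y,Z)=\nabla^\perp_X(B(Y,Z))-B(\nabla_XY,Z)-B(Y,\nabla_XZ)$. We insert the expression \eqref{2nd}, $B(Y,Z)=f\langle Y,Z\rangle\xi+\langle A_\nu Y,Z\rangle\nu$, and then take the $\xi$- and $\nu$-components separately.

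First we differentiate, using \eqref{alpha} for $\nabla^\perp\xi$ and $\nabla^\perp\nu$. The $\langle\nabla_XY,Z\rangle$-type terms cancel against $B(\nabla_XY,Z)+B(Y,\nabla_XZ)$, because $\nabla$ is metric and $\nabla_X(A_\nu Y)-A_\nu\nabla_XY=(\nabla_XA_\nu)Y$. This gives
\[
(\bar\nabla_XB)(Y,Z)=\bigl\{X(f)\langle Y,Z\rangle-\alpha(X)\langle A_\nu Y,Z\rangle\bigr\}\xi
+\bigl\{f\alpha(X)\langle Y,Z\rangle+\langle(\nabla_XA_\nu)Y,Z\rangle\bigr\}\nu .
\]

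Next we antisymmetrize in $X,Y$ and set the result to zero. The $\xi$-component gives
\[
X(f)\langle Y,Z\rangle-Y(f)\langle X,Z\rangle=\alpha(X)\langle A_\nu Y,Z\rangle-\alpha(Y)\langle A_\nu X,Z\rangle .
\]
Since $A_\nu$ is symmetric, $\langle A_\nu Y,Z\rangle$ is the $Z$-component of $A_\nu Y$. As $Z$ is arbitrary, this is exactly \eqref{xicomp}. The $\nu$-component gives
\[
\langle(\nabla_XA_\nu)Y-(\nabla_YA_\nu)X,Z\rangle=f\{\alpha(Y)\langle X,Z\rangle-\alpha(X)\langle Y,Z\rangle\},
\]
which is \eqref{nucomp}.

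The only point needing care is the sign of the normal-connection terms. The $-\alpha(X)\xi$ comes from $\nabla^\perp_X\nu$ and feeds the $\xi$-component, while the $+\alpha(X)\nu$ comes from $\nabla^\perp_X\xi$ and feeds the $\nu$-component. With this bookkeeping the computation is routine, and the argument holds on any open set where the frame $\{\xi,\nu\}$ is defined, in particular on $U$.
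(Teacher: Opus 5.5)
Your proposal is correct and matches the paper's proof essentially line by line. You expand $(\nabla^\perp_X B)(Y,Z)$ using $B(Y,Z)=f\langle Y,Z\rangle\xi+\langle A_\nu Y,Z\rangle\nu$ together with $\nabla^\perp\xi=\alpha\,\nu$ and $\nabla^\perp\nu=-\alpha\,\xi$, then read off the $\xi$- and $\nu$-components of the Codazzi equation. The only cosmetic difference is that you package the $\nu$-terms as $\langle(\nabla_XA_\nu)Y,Z\rangle$ before antisymmetrizing, while the paper does so afterwards; also, the symmetry of $A_\nu$ that you invoke is not needed to drop the arbitrary $Z$.
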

\begin{proof}
By \eqref{2nd}, 
\begin{align*}
\nabla^\perp_X(B(Y,Z))
&= \nabla ^\perp_X \{f\langle Y,Z\rangle \xi+\langle A_\nu Y,Z\rangle \nu\}\\
&= (Xf)\langle Y,Z\rangle\xi+fX\langle Y,Z\rangle \xi\\
&\qquad+ f\alpha(X)\langle Y,Z\rangle \nu\\
&\qquad+X\langle A_\nu Y,Z\rangle \nu-\alpha(X)\langle A_\nu Y,Z\rangle\xi.  
\end{align*}
Since 
\begin{align*}
B(\nabla_XY,Z)+B(Y,\nabla _XZ)
=fX\langle Y,Z\rangle\xi+\langle A_\nu(\nabla_XY),Z\rangle\nu+\langle A_\nu Y,\nabla _XZ\rangle\nu,   
\end{align*}
we obtain 
\begin{align*}
(\nabla^\perp_X B)(Y,Z)
&=\nabla^\perp_X(B(Y,Z))-B(\nabla_XY,Z)-B(Y,\nabla _XZ)\\
&= (Xf)\langle Y,Z\rangle\xi-\alpha(X)\langle A_\nu Y,Z\rangle\xi\\
&\qquad+ f\alpha(X)\langle Y,Z\rangle \nu+X\langle A_\nu Y,Z\rangle \nu\\
&\qquad-\langle A_\nu(\nabla_XY),Z\rangle\nu-\langle A_\nu Y,\nabla _XZ\rangle\nu.  
\end{align*}
By the Codazzi equation
\[
(\nabla^\perp_X B)(Y,Z)-(\nabla^\perp_Y B)(X,Z)=0,
\]
the $\xi$-component is
\begin{align*}
(Xf)\langle Y,Z\rangle-\alpha(X)\langle A_\nu Y,Z\rangle
-\{(Yf)\langle X,Z\rangle-\alpha(Y)\langle A_\nu X,Z\rangle\}=0.
\end{align*}
This leads to \eqref{xicomp}.
The $\nu$-component of the Codazzi equation is
\begin{align*}
0&=f\alpha(X)\langle Y,Z\rangle +X\langle A_\nu Y,Z\rangle 
-\langle A_\nu(\nabla_XY),Z\rangle-\langle A_\nu Y,\nabla _XZ\rangle\\
&\quad-\{
f\alpha(Y)\langle X,Z\rangle +Y\langle A_\nu X,Z\rangle 
-\langle A_\nu(\nabla_YX),Z\rangle-\langle A_\nu X,\nabla _YZ\rangle
\}\\
&=f\alpha(X)\langle Y,Z\rangle-f\alpha(Y)\langle X,Z\rangle \\
&\quad+\langle \nabla _X(A_\nu Y),Z\rangle -\langle A_\nu(\nabla _XY),Z \rangle\\
&\quad-\langle \nabla _Y(A_\nu X),Z\rangle +\langle A_\nu(\nabla _YX),Z \rangle\\
&=f\alpha(X)\langle Y,Z\rangle-f\alpha(Y)\langle X,Z\rangle \\
&\quad+\langle (\nabla _XA_\nu)(Y),Z\rangle
-\langle (\nabla _YA_\nu)(X),Z\rangle.
\end{align*}
This leads to \eqref{nucomp}.
\end{proof}

The closed 1-form $\alpha$ does not vanish on $U$.
\begin{lemma}\label{l5.1}
The 1-form $\alpha$ satisfies $\alpha\neq0$ on $U$.
\end{lemma}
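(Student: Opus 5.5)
We argue by contradiction using \eqref{xicomp}. Suppose that $\alpha_p=0$ at some point $p\in U$; we want to conclude that $df_p=0$, which is impossible by the definition of $U$. The key point is that \eqref{xicomp} is a pointwise tensorial identity. Both sides are $C^\infty(M)$-bilinear in $X,Y$, so we may evaluate it at $p$ on arbitrary tangent vectors $X_p,Y_p\in T_pM$.

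At $p$ the right-hand side of \eqref{xicomp} vanishes, which gives
\[
X_p(f)\,Y_p=Y_p(f)\,X_p\qquad\text{for all }X_p,Y_p\in T_pM.
\]
Since $m\ge 3\ge 2$, for any $X_p$ we can choose a unit vector $Y_p$ orthogonal to $X_p$, so that $X_p$ and $Y_p$ are linearly independent. The identity then forces $X_p(f)=0$. As $X_p$ was arbitrary, $df_p=0$, contradicting $p\in U$. Hence $\alpha_p\neq0$ for every $p\in U$.

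The same argument gives a slightly stronger conclusion, should it be useful later. Suppose $X_p\in\ker\alpha_p$, and pick $Y_p$ with $\alpha(Y_p)\neq 0$. Then \eqref{xicomp} reads
\[
X_p(f)Y_p-Y_p(f)X_p=-\alpha(Y_p)A_\nu X_p,
\]
which expresses $A_\nu X_p$ in terms of $X_p$ and $Y_p$. This observation is the starting point for the later structural analysis, but it is not needed for the lemma itself.

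There is no real obstacle here. The only care needed is to note that \eqref{xicomp} holds pointwise, so it may be applied to vectors at a single point, and that $\dim M\ge2$ guarantees linearly independent pairs of tangent vectors.
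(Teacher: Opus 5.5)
Your proposal is correct and follows essentially the same route as the paper: assume $\alpha_p=0$, evaluate \eqref{xicomp} at $p$ on an orthonormal pair $X,Y$ (available since $m\geq 2$) to get $X(f)=0$ for every $X$, which contradicts $df_p\neq0$ on $U$. Your extra remark about $\ker\alpha_p$ is accurate, but as you note it is not needed for this lemma.
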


\begin{proof}
Assume that $\alpha=0$ at $p\in U$.
By \eqref{xicomp}, we have
\[
X(f)Y-Y(f)X=0
\]
at $p$.
Since $m\geq2$, we may assume that 
$\langle X,Y\rangle =0$ and $|X|=|Y|=1$.
Hence we obtain
$X(f)=0$ at $p$. Since $X$ is arbitrary, $df=0$ at $p\in U$, which is a contradiction. 
\end{proof}

By Lemma \ref{l5.1}, we have the following lemma.

\begin{lemma}\label{l5.2}
For $m\geq3$, $df\wedge \alpha=0$ on $U$.
\end{lemma}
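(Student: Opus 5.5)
We will show that at every point $p\in U$ the covectors $df_p$ and $\alpha_p$ are linearly dependent. Since $\alpha_p\neq0$ by Lemma \ref{l5.1}, this is exactly $df\wedge\alpha=0$ at $p$. The only tool needed is the $\xi$-component of the Codazzi equation, \eqref{xicomp}. Because $m\geq3$, we can choose vectors $X$ and $Y$ so that most terms in \eqref{xicomp} vanish.

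Fix $p\in U$ and let $W=\ker\alpha_p\subset T_pM$. Since $\alpha_p\neq0$, $W$ is a hyperplane, so $\dim W=m-1\geq2$. Take $X,Y\in W$ linearly independent; for instance, orthonormal vectors in $W$, which exist because $m-1\geq2$. Then $\alpha(X)=\alpha(Y)=0$, and \eqref{xicomp} at $p$ reduces to
\[
X(f)\,Y-Y(f)\,X=0 .
\]
Since $X$ and $Y$ are linearly independent, this gives $X(f)=Y(f)=0$. Every vector of $W$ lies in such a pair, because $\dim W\geq2$. Hence $df_p$ vanishes on $W=\ker\alpha_p$.

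Two linear functionals on $T_pM$ with $\ker\alpha_p\subset\ker df_p$ and $\alpha_p\neq0$ satisfy $df_p=\mu\,\alpha_p$ for some scalar $\mu$. Therefore $df\wedge\alpha=0$ at $p$. Since $p\in U$ was arbitrary, $df\wedge\alpha=0$ on $U$.

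The argument has no real obstacle. The only delicate point is where $m\geq3$ enters: we need $\dim\ker\alpha_p\geq2$ in order to pick two independent vectors inside the kernel. For $m=2$ the kernel is a line, \eqref{xicomp} yields nothing, and the lemma genuinely fails. A by-product is the relation $df=\mu\alpha$ on $U$, with $\mu$ nonvanishing there because $df\neq0$ on $U$. This relation should later allow one to compare $A_\nu$ with $\alpha$ via \eqref{xicomp}, using $X=\alpha^\sharp$.
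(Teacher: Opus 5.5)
Your proof is correct and is essentially the paper's argument. The paper chooses an orthonormal frame with $\alpha=a\theta^1$, applies \eqref{xicomp} to pairs $e_\gamma,e_\delta$ ($\gamma\neq\delta$, both $\geq2$) in $\ker\alpha$ to get $e_\gamma(f)=0$, and concludes that $df$ and $\alpha$ are both multiples of $\theta^1$; this is your inclusion $\ker\alpha_p\subset\ker df_p$ written in a frame. Your aside that the lemma genuinely fails for $m=2$ is not established: you only show that \eqref{xicomp} no longer forces $df$ to vanish on $\ker\alpha$, not that a counterexample exists, though this plays no role in the proof.
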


\begin{proof}
Since $\alpha\neq0$, we can choose an orthonormal frame $\{e_1,\cdots,e_m\}$ and dual frame $\{\theta^1,\cdots,\theta^m\}$ such that
\[
\alpha=a\theta^1, \qquad a\neq0.
\]
Hence 
\[
{\rm Ker }~\alpha={\rm span}\{e_2,\cdots,e_m\}.
\]
Since $m\geq3$, we can take $\gamma,\delta\in\{2,\cdots,m\}$ with $\gamma\neq\delta$.
By \eqref{xicomp},
\[
e_\gamma(f)e_\delta-e_\delta(f)e_\gamma=\alpha(e_\gamma)A_\nu e_\delta-\alpha(e_\delta)A_\nu e_\gamma=0,
\] 
where we used $e_\gamma,e_\delta\in {\rm Ker}~\alpha$.
Therefore, we have $e_\gamma(f)=0$ for any $\gamma\geq2$.

Set $X=\sum_{i=1}^mX_ie_i$. Since
\[
df(X)=X_1df(e_1)~~\text{and}~~\alpha(X)=aX_1,
\]
we obtain 
\[
(df\wedge \alpha)(X,Y)
=df(X)\alpha(Y)-df(Y)\alpha(X)=0.
\]
\end{proof}

On $U$, we can choose an adapted orthonormal frame.

\begin{lemma}\label{l5.3}
On every sufficiently small open subset of $U$, there exist a local orthonormal frame $\{e_1,\cdots,e_m\}$ and  smooth functions $a,b$ satisfying $a,b\neq0$ on this subset, such that 
\[
e_1=\frac{\nabla f}{|\nabla f|}, \quad \alpha=a\theta^1,
\]
\[
e_1(f)=ab, \quad e_\gamma(f)=0 \quad (\gamma=2,3,\cdots, m),
\]
and 
\begin{align*}
&A_\nu e_1=-(m-1)be_1,\\
&A_\nu e_\gamma=be_\gamma \quad (\gamma=2,3,\cdots,m).
\end{align*}

\end{lemma}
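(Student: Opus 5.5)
On $U$ we have $df\neq0$, so we take $e_1=\nabla f/|\nabla f|$ and complete it to a local orthonormal frame $\{e_1,\dots,e_m\}$ on a small open subset. Then $e_\gamma(f)=0$ for $\gamma\geq2$ and $e_1(f)=|\nabla f|>0$. By Lemma \ref{l5.2}, $df\wedge\alpha=0$, and by Lemma \ref{l5.1}, $\alpha\neq0$. Hence $\alpha$ is a nonvanishing multiple of $df$, so $\alpha=a\theta^1$ with $a$ smooth and $a\neq0$; explicitly $a=\alpha(e_1)$. This gives the first set of claims.

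Next we extract $A_\nu$ from \eqref{xicomp}.

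First take $X=e_1$, $Y=e_\gamma$ ($\gamma\geq2$). Since $e_\gamma(f)=0$, $\alpha(e_\gamma)=0$ and $\alpha(e_1)=a$, we get
\[
e_1(f)e_\gamma=-a\,A_\nu e_\gamma .
\]
Thus $A_\nu e_\gamma=-\frac{e_1(f)}{a}e_\gamma$ for every $\gamma\geq2$. Set $b:=-e_1(f)/a$. Then $A_\nu e_\gamma=be_\gamma$, and $b$ is smooth and nonzero because $e_1(f)\neq0$ and $a\neq0$.

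Since $A_\nu$ is self-adjoint and preserves $\mathrm{span}\{e_2,\dots,e_m\}$, it also preserves the orthogonal complement, so $A_\nu e_1=\mu e_1$ for some $\mu$. Because $A_\nu$ is trace-free, $\mu+(m-1)b=0$, which gives $A_\nu e_1=-(m-1)be_1$.

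It remains to reconcile the sign convention with the stated relation $e_1(f)=ab$. Our definition gives $e_1(f)=-ab$. We therefore replace $\nu$ by $-\nu$. This flips $\alpha$, hence $a$, and flips $A_\nu$, hence $b$, so the product $ab$ keeps the same sign; the mismatch is not removed this way. Instead we simply redefine $b:=e_1(f)/a$ and recheck the signs directly in \eqref{xicomp}.

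This sign check is the only delicate point. With $\alpha=a\theta^1$, \eqref{xicomp} reads
\[
X(f)Y-Y(f)X=\alpha(X)A_\nu Y-\alpha(Y)A_\nu X .
\]
Substituting $X=e_\gamma$, $Y=e_1$ gives $-e_1(f)e_\gamma=-a\,A_\nu e_\gamma$, that is, $A_\nu e_\gamma=\frac{e_1(f)}{a}e_\gamma$. The earlier substitution $X=e_1$, $Y=e_\gamma$ must give the same result, since \eqref{xicomp} is antisymmetric in $X,Y$; so I expect I mis-signed the first computation. With $b=e_1(f)/a$ we get $e_1(f)=ab$ as stated.

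Finally, consider $X=e_\gamma$, $Y=e_\delta$ with $\gamma,\delta\geq2$. Both sides of \eqref{xicomp} vanish identically, so these components impose no extra constraint. The full statement of the lemma therefore follows.
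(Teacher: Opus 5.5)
Your final argument is correct and essentially the paper's: take $e_1=\nabla f/|\nabla f|$, get $\alpha=a\theta^1$ with $a\neq0$ from Lemmas \ref{l5.1} and \ref{l5.2}, read off $A_\nu e_\gamma=(e_1(f)/a)\,e_\gamma$ from \eqref{xicomp}, and obtain $A_\nu e_1=-(m-1)be_1$ from self-adjointness and trace-freeness. Your first substitution was simply mis-signed: $X=e_1$, $Y=e_\gamma$ gives $e_1(f)e_\gamma=\alpha(e_1)A_\nu e_\gamma=aA_\nu e_\gamma$ directly, so you should delete the provisional $b:=-e_1(f)/a$ and the detour through replacing $\nu$ by $-\nu$.
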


\begin{proof}
By the proof of Lemma \ref{l5.2}, $\alpha=a\theta^1$ and $e_\gamma(f)=0$ for $\gamma=2,3,\cdots,m.$ 
Hence we can take 
\[
e_1=\frac{\nabla f}{|\nabla f|}.
\]

Substituting $X=e_1$ and $Y=e_\gamma$ into \eqref{xicomp}, 
\[
e_1(f)e_\gamma=aA_\nu e_\gamma.
\]
Set $b=\frac{e_1 (f)}{a}\neq0$. Then we have
\[
A_\nu e_\gamma=be_\gamma, \quad e_1(f)=ab.
\]
Set $A_\nu e_1=\sum_{p=1}^m\beta_pe_p$. Since 
\[
0=\langle A_\nu e_\gamma,e_1 \rangle =\langle e_\gamma, A_\nu e_1 \rangle=\beta_\gamma, 
\]
\[
A_\nu e_1=\beta_1 e_1.
\]
Since $A_\nu$ is trace-free, we have
$\beta_1=-(m-1)b$. 

\end{proof}

Let $\mathcal{D}={\rm Ker}~df.$ Since $\mathcal{D}$ is tangent to the level set of $f$, it is integrable, that is, for any vector fields $X,Y\in\mathcal{D}$, $[X,Y]\in\mathcal{D}$.

\begin{lemma}\label{l6.1}
For any vector field $X\in\mathcal{D}$, $Xb=0$.
\end{lemma}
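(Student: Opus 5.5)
The plan is to read off $Xb=0$ from the $\nu$-component \eqref{nucomp} of the Codazzi equation, evaluated on two \emph{distinct} vectors of $\mathcal{D}$. The point is that for such vectors the right-hand side of \eqref{nucomp} vanishes, because $\mathcal{D}={\rm span}\{e_2,\dots,e_m\}={\rm Ker}\,\alpha$ by Lemma \ref{l5.3}.

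\emph{Step 1: write $A_\nu$ globally on the frame.} By Lemma \ref{l5.3},
\[
A_\nu=b\,{\rm Id}-m b\,\theta^1\otimes e_1 .
\]
Differentiating covariantly in a direction $X$ gives
\[
\nabla_XA_\nu=X(b)\,\big({\rm Id}-m\,\theta^1\otimes e_1\big)-mb\big(\theta^1\otimes\nabla_Xe_1+(\nabla_X\theta^1)\otimes e_1\big).
\]
Here $(\nabla_X\theta^1)(Y)=\langle\nabla_Xe_1,Y\rangle$ whenever $\theta^1(Y)=0$, since $(\nabla_X\theta^1)(Y)=X(\theta^1(Y))-\theta^1(\nabla_XY)$ and $\langle\nabla_XY,e_1\rangle=-\langle Y,\nabla_Xe_1\rangle$.

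\emph{Step 2: evaluate on $\gamma\neq\delta$ in $\{2,\dots,m\}$.} Such a pair exists because $m\geq3$. Since $\theta^1(e_\delta)=0$, the formula from Step 1 gives
\[
(\nabla_{e_\gamma}A_\nu)e_\delta=e_\gamma(b)\,e_\delta-mb\,\langle\nabla_{e_\gamma}e_1,e_\delta\rangle\,e_1 .
\]
Substituting into \eqref{nucomp} with $X=e_\gamma$, $Y=e_\delta$, and using $\alpha(e_\gamma)=\alpha(e_\delta)=0$, we obtain
\[
e_\gamma(b)\,e_\delta-e_\delta(b)\,e_\gamma-mb\big(\langle\nabla_{e_\gamma}e_1,e_\delta\rangle-\langle\nabla_{e_\delta}e_1,e_\gamma\rangle\big)e_1=0 .
\]

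\emph{Step 3: conclude.} Taking the $e_\delta$-component of the last identity gives $e_\gamma(b)=0$. Since $\gamma$ is an arbitrary index in $\{2,\dots,m\}$ (we may always pick some $\delta\neq\gamma$), $b$ is annihilated by every $e_\gamma$, and hence by every $X\in\mathcal{D}$ by linearity. As a by-product, the $e_1$-component shows that $\langle\nabla_{e_\gamma}e_1,e_\delta\rangle$ is symmetric in $\gamma,\delta$, which is consistent with the integrability of $\mathcal{D}$ and will likely be useful later.

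The only delicate points are the bookkeeping of the $\nabla\theta^1$ term in Step 1 and the need for two distinct directions in $\mathcal{D}$. The latter is exactly where the hypothesis $m\geq3$ enters, and there is no real analytic obstacle.
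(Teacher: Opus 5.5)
Your proof is correct and follows essentially the paper's route: apply \eqref{nucomp} to two vectors of $\mathcal{D}=\operatorname{Ker}\alpha$, so that the right-hand side vanishes, and then use $m\geq3$ to extract $Xb=0$ from $(Xb)Y-(Yb)X$. The only difference is in handling the leftover term. The paper removes $b[X,Y]-A_\nu[X,Y]$ using the integrability of $\mathcal{D}$, whereas you discard the corresponding $e_1$-term by taking the $e_\delta$-component, and you recover the integrability symmetry as a by-product.
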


\begin{proof}
A vector field $X$ can be written as $X=\sum_{i=2}^mX_ie_i$. We have
\begin{equation*}
A_\nu X=\sum_{i=2}^mX_ibe_i=bX
\end{equation*}
and 
\[
\alpha(X)=\sum_{i=2}^mX_ia\theta^1(e_i)=0.
\]
Let $X$ and $Y$ be smooth vector fields tangent to $\mathcal{D}$. By \eqref{nucomp}, 
\begin{align*}
0
&= (\nabla_X A_\nu)Y-(\nabla_Y A_\nu)X\\
&= \nabla_X (A_\nu Y)-A_\nu(\nabla_XY)
-\{\nabla_Y (A_\nu X)-A_\nu(\nabla_YX)\}\\
&=(Xb)Y-(Yb)X+b([X,Y])-A_\nu([X,Y]).
\end{align*}
Since $[X,Y]\in\mathcal{D}$, $A_\nu([X,Y])=b[X,Y]$. Hence we have
\[
(Xb)Y-(Yb)X=0.
\]
Since $m\geq3$, we obtain $Xb=0$.

\end{proof}

By the above lemmas, we obtain a system of ODEs for $m$-dimensional pseudo-umbilical submanifolds in $N^{m+2}(c)$ $(m\geq3)$.

\begin{lemma}\label{l6.2}
There exists a smooth function $\sigma$ such that 
\[
\nabla _Ye_1=\sigma Y, \quad \text{for every smooth vector field}\quad Y\in\mathcal{D},
\]
\[
\nabla_{e_1}e_1=0,
\]
and 
\[
e_1 b+m\sigma b=-fa.
\]
\end{lemma}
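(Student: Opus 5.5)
The plan is to extract everything from the $\nu$-component of the Codazzi equation \eqref{nucomp}, applied to the adapted frame of Lemma~\ref{l5.3}. In that frame $A_\nu=b\,{\rm Id}-mb\,\theta^1\otimes e_1$, that is, $A_\nu Y=bY$ on $\mathcal{D}$ and $A_\nu e_1=-(m-1)be_1$. We also have $\alpha(e_1)=a$ and $\alpha|_{\mathcal{D}}=0$.

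First take $X=e_1$ and $Y\in\mathcal{D}$ in \eqref{nucomp}. The right-hand side is $-fa\,Y$. For the left-hand side we compute
\[
(\nabla_{e_1}A_\nu)Y=(e_1b)Y+b\nabla_{e_1}Y-A_\nu(\nabla_{e_1}Y)=(e_1b)Y+mb\langle\nabla_{e_1}Y,e_1\rangle e_1,
\]
\[
(\nabla_YA_\nu)e_1=-(m-1)(Yb)e_1-(m-1)b\nabla_Ye_1-A_\nu(\nabla_Ye_1).
\]
Lemma~\ref{l6.1} gives $Yb=0$. Since $\nabla_Ye_1\perp e_1$, we have $\nabla_Ye_1\in\mathcal{D}$, so $A_\nu(\nabla_Ye_1)=b\nabla_Ye_1$. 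Hence $(\nabla_YA_\nu)e_1=-mb\nabla_Ye_1$. Substituting, the identity becomes
\[
(e_1b)Y+mb\langle\nabla_{e_1}Y,e_1\rangle e_1+mb\nabla_Ye_1=-faY.
\]

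Next we separate components. The $e_1$-component gives $mb\langle\nabla_{e_1}Y,e_1\rangle=0$. Because $b\neq0$, this yields $\langle Y,\nabla_{e_1}e_1\rangle=0$ for all $Y\in\mathcal{D}$. Since $\nabla_{e_1}e_1\perp e_1$ automatically, we conclude $\nabla_{e_1}e_1=0$. The $\mathcal{D}$-component gives
\[
\nabla_Ye_1=-\frac{e_1b+fa}{mb}\,Y.
\]
We then define $\sigma:=-(e_1b+fa)/(mb)$. This is smooth because $b\neq0$ on the subset, it satisfies $\nabla_Ye_1=\sigma Y$, and rearranging gives $e_1b+m\sigma b=-fa$.

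The only delicate point is the bookkeeping in $(\nabla_{e_1}A_\nu)Y$ and $(\nabla_YA_\nu)e_1$. Specifically, one must note that $\nabla_{e_1}Y$ may have an $e_1$-component, on which $A_\nu$ acts by $-(m-1)b$ rather than $b$, and that $\nabla_Ye_1$ lies in $\mathcal{D}$. Once $A_\nu$ is written as $b\,{\rm Id}-mb\,\theta^1\otimes e_1$, both facts fall out directly, and no further lemmas beyond Lemmas~\ref{l5.3} and \ref{l6.1} are needed.
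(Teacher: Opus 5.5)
Your proof is correct and is essentially the paper's argument. You apply \eqref{nucomp} with $X=e_1$ and $Y\in\mathcal{D}$, and use Lemma~\ref{l6.1} together with $\nabla_Ye_1\in\mathcal{D}$ to get $(\nabla_YA_\nu)e_1=-mb\nabla_Ye_1$. Splitting into $\mathcal{D}$- and $e_1$-components then gives $\nabla_Ye_1=\sigma Y$ and $\nabla_{e_1}e_1=0$. Writing $A_\nu=b\,\mathrm{Id}-mb\,\theta^1\otimes e_1$ only makes the $e_1$-component $mb\langle\nabla_{e_1}Y,e_1\rangle e_1$ of $(\nabla_{e_1}A_\nu)Y$ more explicit than the paper does.
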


\begin{proof}
By \eqref{nucomp}, for $Y\in\mathcal{D}$, we have
\begin{equation}\label{221}
(\nabla_{e_1}A_\nu)Y-(\nabla _YA_\nu)e_1
=-faY.
\end{equation}
By Lemma \ref{l6.1}, 
\begin{equation}\label{230}
(\nabla_{e_1}A_\nu)Y=(e_1 b)Y+b\nabla_{e_1}Y-A_\nu(\nabla_{e_1}Y)
\end{equation}
and
\begin{align}\label{231}
(\nabla _YA_\nu)e_1
&=\nabla_Y(A_\nu e_1)-A_\nu(\nabla_Ye_1)\\
&=-(m-1)(Yb)e_1-(m-1)b\nabla_Ye_1-A_\nu(\nabla_Ye_1)\notag\\
&=-(m-1)b\nabla_Ye_1-A_\nu(\nabla_Ye_1).\notag
\end{align}

Since 
\[
\langle \nabla_Ye_1,e_1\rangle =0,
\]
$\nabla_Ye_1\in \mathcal{D}.$
Hence 
\[
A_\nu(\nabla_Ye_1)=b\nabla_Y e_1.
\]
Substituting this into \eqref{231}, 
\[
(\nabla _YA_\nu)e_1=-mb\nabla_Ye_1.
\]
We consider the $\mathcal{D}$-component of \eqref{230}.
For $Z\in\mathcal{D}$,
\begin{align*}
\langle (\nabla_{e_1}A_\nu)Y,Z\rangle 
&=\langle (e_1 b)Y+b\nabla_{e_1}Y-A_\nu(\nabla_{e_1}Y),Z\rangle\\
&=\langle (e_1 b)Y,Z\rangle +\langle \nabla_{e_1}Y,bZ-A_\nu Z\rangle \\
&=\langle (e_1 b)Y,Z\rangle.
\end{align*}
Hence the $\mathcal{D}$-component of $(\nabla_{e_1}A_\nu)Y$ is $(e_1 b)Y$.
Therefore, the $\mathcal{D}$-component of \eqref{221} is
\[
(e_1 b)Y+mb\nabla _Ye_1=-faY,
\]
from which we deduce
\[
\nabla_Ye_1=\sigma Y.
\]

The $e_1$-component of $(\nabla_{e_1}A_\nu)Y$ is
\[
b\langle\nabla_{e_1}Y,e_1 \rangle
-\langle \nabla_{e_1}Y,A_\nu e_1
\rangle   
\]
and by taking the $e_1$-component of \eqref{221}, we have 
\[
\langle Y,\nabla _{e_1}e_1\rangle=0.
\]
Hence the $\mathcal{D}$-component of $\nabla_{e_1}e_1$ vanishes.
Combining this with 
\[
\langle \nabla_{e_1}e_1,e_1\rangle =0,
\]
we obtain 
\[
\nabla_{e_1}e_1=0.
\]

\end{proof}

By the above lemmas, we have a local warped product structure on $U$.
\begin{lemma}
Near every point of $U$, there exist local coordinates in which the metric $g$ takes the form
\[
g=ds^2+\varphi(s)^2g_L,\qquad e_1=\partial_s,
\]
where $s$ is an arclength parameter along the integral curves of $e_1$, $L$ is a local level leaf of $f$, $g_L$ is a metric on $L$, $\varphi$ is a positive smooth function of $s$, and
\[
\sigma=\frac{\partial_s\varphi}{\varphi}.
\] 
Furthermore, $f,a,b,\sigma$ depend only on $s$.
\end{lemma}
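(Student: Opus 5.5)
We will use the standard Brinkmann--Tashiro argument, taking as input Lemma~\ref{l6.2}: $\nabla_{e_1}e_1=0$ and $\nabla_Y e_1=\sigma Y$ for $Y\in\mathcal{D}$.

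First we show that the dual $1$-form $\theta^1$ is closed. For $X,Y\in\mathcal{D}$,
\[
d\theta^1(X,Y)=-\theta^1([X,Y])=0,
\]
since $\mathcal{D}$ is integrable. For $Y\in\mathcal{D}$,
\[
d\theta^1(e_1,Y)=-\langle [e_1,Y],e_1\rangle=-\langle \nabla_{e_1}Y,e_1\rangle+\langle\nabla_Y e_1,e_1\rangle=\langle Y,\nabla_{e_1}e_1\rangle+0=0.
\]
Hence locally $\theta^1=ds$ for a function $s$ with $\nabla s=e_1$. Since $df=(e_1f)\,\theta^1=ab\,ds$ is nonzero, the level sets of $s$ and of $f$ coincide locally, and $f=f(s)$.

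Next, the condition $\nabla_Y e_1=\sigma Y$ says the level leaves are totally umbilical with mean curvature $\sigma$. We show that $\sigma$ is constant on leaves. The Codazzi-type identity coming from the curvature of $M$ gives
\[
R(X,Y)e_1=(X\sigma)Y-(Y\sigma)X \quad (X,Y\in\mathcal{D}).
\]
The Gauss equation writes $R$ through $B$. By \eqref{2nd} and Lemma~\ref{l5.3}, $e_1$ is an eigenvector of both $A_\xi$ and $A_\nu$, so $R(X,Y)e_1$ has no component in $\mathcal{D}$. Since $m\ge3$, this forces $X\sigma=0$.

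As an alternative, we can use Lemma~\ref{l6.2} directly. There
\[
\sigma=-\frac{e_1b+fa}{mb},
\]
and $Xb=0$ by Lemma~\ref{l6.1}. It then suffices to show $Xa=0$ and $X(e_1b)=0$. For $a$: closedness of $\alpha=a\,ds$ gives $da\wedge ds=0$, so $Xa=0$. For $e_1b$: we compute
\[
X(e_1b)=e_1(Xb)+[X,e_1]b=[X,e_1]b.
\]
Here $[X,e_1]=\nabla_Xe_1-\nabla_{e_1}X=\sigma X-\nabla_{e_1}X$, whose $e_1$-component is $\langle X,\nabla_{e_1}e_1\rangle=0$. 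So $[X,e_1]\in\mathcal{D}$, and therefore $[X,e_1]b=0$. Thus $\sigma$, $a$, $b$ and $f$ all depend only on $s$; this is the cleaner route, and we will use it.

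Finally, we build the warped product. The integral curves of $e_1$ are unit-speed geodesics orthogonal to the leaves, and the flow of $e_1$ maps leaves to leaves because $e_1=\nabla s$ with $|\nabla s|=1$. We take coordinates $(s,y)$, where $y$ are coordinates on a fixed leaf $L$ transported by the flow. Then $g=ds^2+h_s$, and
\[
\mathcal{L}_{e_1}g|_{\mathcal{D}}=2\langle\nabla_\cdot e_1,\cdot\rangle=2\sigma(s)\,h_s.
\]
Integrating, $h_s=\varphi(s)^2g_L$ with $\varphi'/\varphi=\sigma$ and $\varphi>0$.

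The main point needing care is checking that $\sigma$ is constant on leaves, so that the ODE $\partial_s h=2\sigma h$ has a scalar solution; the argument via Lemma~\ref{l6.2} together with $[X,e_1]\in\mathcal{D}$ handles this.
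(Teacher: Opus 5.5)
Your proposal is correct and follows essentially the same route as the paper. You prove $\theta^1$ is closed using integrability of $\mathcal{D}$ and $\nabla_{e_1}e_1=0$, deduce that $f$, $b$, $a$ and $\sigma=-(e_1b+fa)/(mb)$ depend only on $s$, and integrate $\partial_s g_s=2\sigma(s)g_s$ along the flow of $e_1$; the paper does the same. The differences are minor: you obtain $Xa=0$ from $d\alpha=0$ rather than from $a=\partial_sf/b$, and you verify $X(e_1b)=0$ explicitly via $[X,e_1]\in\mathcal{D}$, which the paper leaves implicit. Your alternative Gauss-equation argument ($R(X,Y)e_1=0$) is also valid but not needed.
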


\begin{proof}
For $X,Y\in\mathcal{D}$, we have
\begin{align*}
d\theta^1(X,Y)
&=(\nabla_X\theta^1)(Y)-(\nabla_Y\theta^1)(X)\\
&=-\theta^1(\nabla_XY)+\theta^1(\nabla_YX)\\
&=\theta^1([Y,X])=0,
\end{align*}
where the second equality follows from $\theta^1(Y)=0$ and the final equality follows from the integrability of $\mathcal{D}$.
A similar calculation, using Lemma \ref{l6.2}, gives
\[
d\theta^1(e_1,Y)=0.
\]
Hence $\theta^1$ is closed. By the Poincar\'e lemma, there exists $s$ such that $\theta^1=ds$ and $e_1=\partial_s$, where $s$ is an arclength parameter of an integral curve of $e_1$. 
Thus $df=\partial_sfds$ and $f$ depends only on $s$.
By Lemma \ref{l6.1}, $b$ depends only on $s$.
By Lemma \ref{l5.3}, we have
\[
a=\frac{\partial_s f}{b},
\]
and $a$ depends only on $s$.
By Lemma \ref{l6.2}, 
\[
\sigma=-\frac{\partial_sb+fa}{mb}.
\]
Hence $\sigma$ also depends only on $s$. 

Let $Y,Z$ be tangent vector fields on a fixed leaf.
Extend them locally by the flow of $e_1$ so that $[e_1,Y]=[e_1,Z]=0.$
Then we have
\begin{align*}
\partial_sg(Y,Z)
&=\langle \nabla_{e_1}Y,Z\rangle+\langle Y,\nabla_{e_1}Z\rangle\\
&=\langle \nabla_Ye_1,Z\rangle+\langle Y,\nabla_Ze_1\rangle\\  
&=2\sigma g(Y,Z).
\end{align*}
Let $g_s$ denote the metric induced on the level set $s=\text{const}$. Then
\[
\partial_s g_s=2\sigma(s)g_s.
\]
Therefore, for a metric $g_L$ on one leaf and a positive smooth function $\varphi$, $g_s$ is written as
\[
g_s=\varphi(s)^2g_L, \quad \sigma=\frac{\partial_s\varphi}{\varphi}.
\]
\end{proof}

\section{ODEs for pseudo-umbilical $\lambda$-biminimal submanifolds}

In this section, we will deduce ODEs from the $\lambda$-biminimal equation.
Since $M$ is a pseudo-umbilical submanifold, we have
\[
\sum_{i=1}^mB(A_{\bf H}e_i,e_i)=m f^2{\bf H}.
\]
Hence the $\lambda$-biminimal equation 
\[
\Delta^\perp{\bf H}-\sum_{i=1}^mB(A_{\bf H}e_i,e_i)+mc{\bf H}=\lambda{\bf H}
\]
can be written as 
\begin{equation}\label{plb}
\Delta^\perp{\bf H}=(mf^2+\mu){\bf H},
\end{equation}
where $\mu=\lambda-mc$.

In this case, the equation of $\lambda$-biminimal submanifolds is as follows.

\begin{proposition}\label{7.2}
Let 
$
{\bf x}:(M^m,g)\rightarrow N^{m+2}(c)
$
be an $m$-dimensional connected pseudo-umbilical submanifold in an $(m+2)$-dimensional Riemannian manifold of constant sectional curvature $c$.
Then locally on $U$, the $\lambda$-biminimal equation can be written as
\begin{align}
&\Delta f=f(|\alpha|^2+mf^2+\mu),\label{nsn}\\
&2\alpha(\nabla f)+f{\rm div}(\alpha^\sharp)=0.\label{nst}
\end{align}
\end{proposition}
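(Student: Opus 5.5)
We compute $\Delta^\perp{\bf H}$ directly from ${\bf H}=f\xi$ and the connection form $\alpha$ introduced in \eqref{alpha}, then split the $\lambda$-biminimal equation \eqref{plb} into its $\xi$- and $\nu$-components. No curvature identities are needed beyond the structure of the normal connection.

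First, for any tangent vector $X$,
\[
\nabla^\perp_X{\bf H}=(Xf)\xi+f\alpha(X)\nu .
\]
We work at a point with a local orthonormal frame $\{e_i\}$ satisfying $\nabla_{e_i}e_i=0$ at that point; alternatively, we keep the correction terms $\nabla_{e_i}e_i$ throughout, and they assemble into $\Delta f$ and ${\rm div}(\alpha^\sharp)$. With the convention $\Delta^\perp=\sum_i(\nabla^\perp_{e_i}\nabla^\perp_{e_i}-\nabla^\perp_{\nabla_{e_i}e_i})$, we differentiate once more and use $\nabla^\perp\xi=\alpha\otimes\nu$ and $\nabla^\perp\nu=-\alpha\otimes\xi$. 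This gives
\[
\nabla^\perp_{e_i}\nabla^\perp_{e_i}{\bf H}
=\bigl(e_ie_if-f\alpha(e_i)^2\bigr)\xi
+\bigl(2(e_if)\alpha(e_i)+(e_if)\ldots\bigr)\nu .
\]
More carefully, the $\nu$-coefficient is $(e_if)\alpha(e_i)+e_i(f\alpha(e_i))=2(e_if)\alpha(e_i)+f\,e_i(\alpha(e_i))$. Summing over $i$ and subtracting the $\nabla_{e_i}e_i$ terms, we obtain
\[
\Delta^\perp{\bf H}=\bigl(\Delta f-f|\alpha|^2\bigr)\xi+\bigl(2\alpha(\nabla f)+f\,{\rm div}(\alpha^\sharp)\bigr)\nu .
\]
Here we use $\sum_i\bigl(e_i(\alpha(e_i))-\alpha(\nabla_{e_i}e_i)\bigr)={\rm div}(\alpha^\sharp)$ and $\sum_i(e_if)\alpha(e_i)=\alpha(\nabla f)$.

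Next, the right-hand side of \eqref{plb} is $(mf^2+\mu)f\xi$, which has no $\nu$-component. Comparing $\xi$-components gives $\Delta f-f|\alpha|^2=f(mf^2+\mu)$, which is \eqref{nsn}. Comparing $\nu$-components gives \eqref{nst}. Equation \eqref{plb} itself was derived above from pseudo-umbilicity via $\sum_i B(A_{\bf H}e_i,e_i)=f^2\sum_iB(e_i,e_i)=mf^2{\bf H}$.

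The only real care point is sign and normalization bookkeeping. Both Laplacians are non-positive, i.e.\ trace of the Hessian, so $\Delta f$ enters with a plus sign. The cross term $(e_if)\alpha(e_i)$ appears twice: once from differentiating $f$ in $f\alpha(e_i)\nu$, and once from differentiating $\xi$ in $(e_if)\xi$. This double appearance produces the factor $2$ in \eqref{nst}.
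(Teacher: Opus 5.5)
Your proposal is correct and is essentially the paper's proof. You expand $\Delta^\perp(f\xi)$ using $\nabla^\perp\xi=\alpha\otimes\nu$ and $\nabla^\perp\nu=-\alpha\otimes\xi$, recognize $\Delta f$, $\alpha(\nabla f)$ and ${\rm div}(\alpha^\sharp)$ in the sums, and compare $\xi$- and $\nu$-components with \eqref{plb}; the only blemish is the placeholder in your first display of the $\nu$-coefficient, which should be replaced by the value $2(e_if)\alpha(e_i)+f\,e_i(\alpha(e_i))$ that you give immediately afterwards.
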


\begin{proof}
Since ${\bf H}=f\xi$, we have
\begin{align*}
\Delta^\perp{\bf H}
&=\Delta^\perp (f\xi)\\
&=
\sum_{i=1}^m
\{
e_ie_if\xi+2e_i f\nabla^\perp_{e_i}\xi+f\nabla^\perp_{e_i}\nabla^\perp_{e_i}\xi
-(\nabla_{e_i}e_i f)\xi-f\nabla^\perp_{\nabla_{e_i}e_i}\xi
\}\\
&=\Delta f\xi+2\alpha(\nabla f)\nu
+f\sum_{i=1}^m\{e_i(\alpha(e_i))\nu-\alpha(\nabla _{e_i}e_i)\nu-(\alpha(e_i))^2\xi\}\\
&=\Delta f\xi+2\alpha(\nabla f)\nu
+f{\rm div}(\alpha^\sharp)\nu-f|\alpha|^2\xi.
\end{align*}
Substituting this into \eqref{plb}, we complete the proof.
\end{proof}

Using the warped product expression for the metric, we obtain the following ODEs.

\begin{proposition}\label{p8.1}
On every sufficiently small connected open subset of $U$, we have
\begin{equation}\label{8.1}
\partial_sf=ab,
\end{equation}
\begin{equation}\label{8.2}
\partial_sa=-(m-1)\sigma a-2\frac{\partial_sf}{f}a,
\end{equation}
\begin{equation}\label{8.3}
\partial_sb=-m\sigma b-fa,
\end{equation}
\begin{equation}\label{8.4}
\partial_s\sigma =-\sigma^2-f^2+(m-1)b^2-c,
\end{equation}
\begin{equation}\label{8.5}
\partial_s\partial_sf+(m-1)\sigma \partial_sf=fa^2+mf^3+\mu f ,
\end{equation}
\begin{equation}\label{46}
-m\sigma\frac{\partial_sf}{f}-2\left(\frac{\partial_sf}{f}\right)^2-2a^2=mf^2+\mu.
\end{equation}
\end{proposition}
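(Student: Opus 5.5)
The six relations are obtained by rewriting the structure results of the previous section and Proposition \ref{7.2} in the warped product coordinates $g=ds^2+\varphi(s)^2g_L$ with $e_1=\partial_s$. Throughout I will use that $f,a,b,\sigma$ depend only on $s$. Equation \eqref{8.1} is the relation $e_1(f)=ab$ of Lemma \ref{l5.3}, and \eqref{8.3} is the last identity of Lemma \ref{l6.2}, both written with $e_1=\partial_s$. The remaining four equations come from the tangential identities, from Proposition \ref{7.2}, and from the Gauss equation.

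First I compute divergences in the adapted frame. Lemma \ref{l6.2} gives $\nabla_{e_1}e_1=0$ and $\nabla_{e_\gamma}e_1=\sigma e_\gamma$ for $\gamma\ge2$, so ${\rm div}\, e_1=(m-1)\sigma$. Consequently
\[
\Delta f=\partial_s\partial_s f+(m-1)\sigma\,\partial_s f .
\]
Since $\alpha=a\theta^1$, we have $\alpha^\sharp=ae_1$, hence
\[
{\rm div}(\alpha^\sharp)=\partial_s a+(m-1)\sigma a,\qquad |\alpha|^2=a^2,\qquad \alpha(\nabla f)=a\,\partial_s f .
\]
Substituting into \eqref{nsn} gives \eqref{8.5} directly. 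Substituting into \eqref{nst} gives $2a\,\partial_sf+f(\partial_sa+(m-1)\sigma a)=0$, which is \eqref{8.2} after division by $f>0$.

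For \eqref{8.4} I will compare two expressions for the sectional curvature of the plane $e_1\wedge e_\gamma$. By the Gauss equation and \eqref{2nd}, with $B(e_1,e_1)=f\xi-(m-1)b\nu$, $B(e_\gamma,e_\gamma)=f\xi+b\nu$ and $B(e_1,e_\gamma)=0$, one gets
\[
K(e_1,e_\gamma)=c+f^2-(m-1)b^2 .
\]
On the other hand, for the warped product $ds^2+\varphi^2g_L$ the radial curvature is $K(\partial_s,Y)=-\varphi''/\varphi=-(\partial_s\sigma+\sigma^2)$, using $\sigma=\varphi'/\varphi$. This can also be checked directly from $\nabla_Ye_1=\sigma Y$ and $\nabla_{e_1}e_1=0$. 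Equating the two expressions gives \eqref{8.4}. The only delicate points are the sign conventions for curvature and for the term $-|B(e_1,e_\gamma)|^2$, which vanishes here.

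Finally, \eqref{46} is a first-order consequence obtained by eliminating $\partial_s\partial_s f$. Differentiating \eqref{8.1} and inserting \eqref{8.2} and \eqref{8.3} gives
\[
\partial_s\partial_sf=-(2m-1)\sigma ab-2ab\frac{\partial_sf}{f}-fa^2 .
\]
Replace $ab$ by $\partial_sf$ and plug this into \eqref{8.5}. The $\sigma$-terms combine to $-m\sigma\,\partial_sf$ and the $a^2$-terms to $-2fa^2$, so
\[
-m\sigma\,\partial_sf-2\frac{(\partial_sf)^2}{f}-2fa^2=mf^3+\mu f .
\]
Dividing by $f$ yields \eqref{46}. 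I expect no real obstacle; the main care needed is the curvature sign bookkeeping in \eqref{8.4} and keeping track of the factors $m$ versus $m-1$ in the divergence computations.
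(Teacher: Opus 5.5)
Your proposal is correct and follows the paper's proof essentially step for step. You get \eqref{8.1} and \eqref{8.3} from Lemmas~\ref{l5.3} and~\ref{l6.2}, and \eqref{8.2} and \eqref{8.5} from Proposition~\ref{7.2} via $\alpha^\sharp=ae_1$ and ${\rm div}\,e_1=(m-1)\sigma$. You get \eqref{8.4} by equating the Gauss-equation value $c+f^2-(m-1)b^2$ with the warped-product value $-(\partial_s\sigma+\sigma^2)$, and \eqref{46} by eliminating $\partial_s\partial_sf$, which is exactly the paper's route.
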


\begin{proof}
By Lemmas \ref{l5.3} and \ref{l6.2}, we have \eqref{8.1} and \eqref{8.3}.
\[
\alpha^\sharp=ae_1 \quad \text{and} \quad {\rm div}\,e_1=(m-1)\sigma,
\] 
where {\rm div} denotes the divergence with respect to $g$.
In fact, since $\alpha=a(s)ds$ and $\alpha(X)=\langle \alpha^\sharp ,X\rangle$, 
\[
a(s)ds(X)=\alpha(X)=\langle \alpha^\sharp ,X\rangle=ds(\alpha^\sharp)ds(X).
\]
Hence we have the first equality.
By Lemma \ref{l6.2},
\[
{\rm div}\,e_1=\sum_{i=1}^m\langle \nabla_{e_i}e_1,e_i\rangle =(m-1)\sigma.
\]

By using these equations, we have
\begin{align*}
{\rm div}\,(\alpha^\sharp)
&= {\rm div}\,(ae_1)\\
&= \langle \nabla_{e_1}(ae_1),e_1\rangle+\sum_{i=2}^m\langle \nabla_{e_i}(ae_1),e_i\rangle \\
&=\partial_sa+a\,{\rm div}\,e_1\\
&=\partial_sa+(m-1)a\sigma.
\end{align*}
Substituting this and
\[
\langle \nabla f,\alpha^\sharp \rangle =a\partial_sf
\]
into \eqref{nst}, we obtain \eqref{8.2}.

For $\gamma\in\{2,3,\cdots,m\}$, by \eqref{2nd}, Lemma \ref{l5.3}, and the Gauss equation, the sectional curvature $K(e_1,e_\gamma)$ of the plane spanned by $e_1$ and $e_\gamma$ is
\begin{align}\label{421}
K(e_1,e_\gamma)
&=c+\langle B(e_1,e_1),B(e_\gamma,e_\gamma)\rangle-|B(e_1,e_\gamma)|^2\\
&=c+\langle f\xi-(m-1)b\nu,f\xi+b\nu\rangle  \notag\\
&=c+f^2-(m-1)b^2.\notag
\end{align}
On the other hand, by the curvature equation of the warped product metric (cf.  \cite{ONeill1983}), 
\begin{equation}
K(e_1,e_\gamma)\label{422}
=-\frac{\partial_s\partial_s\varphi}{\varphi}
=-(\partial_s\sigma +\sigma^2).
\end{equation}
Combining \eqref{421} with \eqref{422}, we have \eqref{8.4}.

We consider \eqref{nsn}. In this metric, a straightforward computation gives 
\begin{align*}
\Delta f
=\partial_s\partial_sf+(m-1)\sigma\partial_s f.
\end{align*}
Substituting this into \eqref{nsn} gives \eqref{8.5}.

By \eqref{8.1}, \eqref{8.2}, and \eqref{8.3} we have
\begin{align*}
\partial_s\partial_sf
&=\partial_sab+a\partial_sb\\
&=\left\{-(m-1)\sigma a-2\frac{\partial_sf}{f}a\right\}b+a\{-m\sigma b-fa\}\\
&=-(2m-1)\sigma \partial_sf-2\frac{(\partial_sf)^2}{f}-fa^2.
\end{align*} 
Substituting this into \eqref{8.5} gives \eqref{46}.
\end{proof}

For convenience, set 
\[
p=\frac{b}{f}, \qquad q=\frac{a}{f},\qquad r=\frac{\sigma}{f},\quad \ell=\frac{\mu}{f^2},\quad\text{and}\quad \kappa=\frac{c}{f^2}. 
\]
We introduce a new parameter 
\[
\tau=\tau_0+\int_{s_0}^sf(u) du.
\]

A straightforward computation shows that 

\begin{align*}
\partial_\tau p
=-mrp-q(1+p^2),
\end{align*}
\begin{align*}
\partial_\tau q
=-(m-1)rq-3q^2p,
\end{align*}
\begin{align*}
\partial_\tau r
=-r^2-1+(m-1)p^2-pqr-\kappa,
\end{align*}
\begin{align*}
\partial_\tau f
=pqf.
\end{align*}
By \eqref{46} and \eqref{8.1}, we obtain
\begin{align}\label{ellpqr}
\ell=-mrpq-2p^2q^2-2q^2-m.
\end{align}
By the definition of $\ell$, 
\begin{align}\label{elltau}
\partial_\tau \ell
=-2pq\ell.
\end{align}
We remark that $pq\neq0$.

We introduce the logarithmic mean curvature 
\[
\rho=\ln f,
\]
and use it as a new parameter.
Note that 
\[
\frac{d\rho}{d\tau}=\frac{d}{d\tau}\ln f=\frac{\partial_{\tau}f}{f}=pq\neq0.
\]

Set 
\[
y=1+\frac{1}{p^2},\qquad t=\frac{1}{q^2},\quad \text{and}\quad z=\frac{r}{pq}.
\]
By definition, $y>1$ and $t>0$.

\begin{lemma}\label{l11.1}
We have
\begin{align*}
y'&=2(y-1)(mz+y),\\
t'&=2t\{(m-1)z+3\},\\
z'&=2(m-1)z^2+(y+2)z+(m-y)t-\kappa(y-1)t,\\
\ell&=-\frac{mz+2y+m(y-1)t}{(y-1)t},\\
\ell'&=-2\ell,
\end{align*}
where $'=\partial_\rho$.
\end{lemma}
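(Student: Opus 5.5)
Lemma~\ref{l11.1} is a change of variables in the $\tau$-system stated just before it, so I will prove it by the chain rule. Since $d\rho/d\tau=pq\neq0$, for any function $F$ of $\tau$ we have $F'=\partial_\tau F/(pq)$. I will also use the relation $r=zpq$ throughout to remove $r$.

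\emph{The equation for $y$.} From $y=1+p^{-2}$ we get $\partial_\tau y=-2p^{-3}\partial_\tau p$. Substituting $\partial_\tau p=-mrp-q(1+p^2)$ gives
\[
\partial_\tau y=2p^{-3}\{mzp^2q+q(1+p^2)\}.
\]
Dividing by $pq$ yields
\[
y'=2p^{-2}\{mz+1+p^{-2}\},
\]
and since $p^{-2}=y-1$ and $1+p^{-2}=y$, this is $2(y-1)(mz+y)$.

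\emph{The equation for $t$.} From $t=q^{-2}$ we get $\partial_\tau t=-2q^{-3}\partial_\tau q=2q^{-2}\{(m-1)r+3qp\}$. Dividing by $pq$ and writing $r/(pq)=z$ gives $t'=2t\{(m-1)z+3\}$.

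\emph{The equation for $z$.} This is the step with the most bookkeeping, so I expect it to be the main obstacle. Write
\[
z'=\frac{\partial_\tau r}{(pq)^2}-\frac{r\,\partial_\tau(pq)}{(pq)^3}.
\]
The pieces are as follows.
\begin{itemize}
\item $\partial_\tau(pq)=q\,\partial_\tau p+p\,\partial_\tau q=-(2m-1)rpq-q^2(1+p^2)-3q^2p^2$.
\item Hence the second term equals $z\{(2m-1)zpq+q^2(1+4p^2)\}/(pq)$.
\item The first term is $\{-r^2-1+(m-1)p^2-pqr-\kappa\}/(p^2q^2)$.
\end{itemize}
After converting $p^{-2}=y-1$ and $q^{-2}=t$, the terms collect as follows.
\begin{itemize}
\item The $z^2$ terms give $-z^2+(2m-1)z^2=2(m-2)z^2$. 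This is off from the claimed $2(m-1)z^2$, so I must recheck the signs in the $r$-equation; getting these coefficients exactly right is where care is needed.
\item The terms $-z+z(1+4p^2)/p^2$ give $z(4+p^{-2})=z(y+3)$.
\item The constant terms give $-(y-1)t+(m-1)t-\kappa(y-1)t=(m-y)t-\kappa(y-1)t$, which agrees with the stated formula.
\end{itemize}
The mismatches in the $z^2$ and $z$ coefficients mean the derivation of $\partial_\tau r$ must be redone before this line can be confirmed. To do so, I will recompute $\partial_\tau r=(\partial_s\sigma\, f-\sigma\,\partial_sf)/f^3$ from \eqref{8.4}, together with $\partial_sf=ab=pqf^2$. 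This gives
\[
\partial_\tau r=-2r^2-1+(m-1)p^2-pqr-\kappa,
\]
with coefficient $-2$ on $r^2$, since $\sigma^2/f^2$ contributes $r^2$ and there is no extra factor. With that correction I then re-collect, and I expect the $z^2$ and $z$ coefficients to reduce to the stated $2(m-1)$ and $y+2$.

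\emph{The formula for $\ell$ and its derivative.} For $\ell$, I divide \eqref{ellpqr} by $p^2q^2$ after substituting $r=zpq$:
\[
\ell=-mz-2-2p^{-2}-m p^{-2}q^{-2}.
\]
I then express this in $y$ and $t$ and bring it to the displayed fraction; the form in the statement has common denominator $(y-1)t=p^{-2}q^{-2}$. Finally, \eqref{elltau} gives
\[
\ell'=\frac{\partial_\tau\ell}{pq}=-2\ell
\]
immediately.
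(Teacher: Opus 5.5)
Your approach is the same as the paper's: the chain rule $F'=\partial_\tau F/(pq)$ together with the substitution $r=zpq$. Your computations of $y'$, $t'$ and $\ell'$ are correct. The genuine gap is the $z'$ equation, which your proposal does not establish, and the repair you propose is false.

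The two ``mismatches'' you found are arithmetic slips in the collection step, not defects in the $\partial_\tau r$ equation:
\begin{itemize}
\item $-z^2+(2m-1)z^2=(2m-2)z^2=2(m-1)z^2$, not $2(m-2)z^2$.
\item $-z+z(1+4p^2)p^{-2}=z(p^{-2}+3)=(y+2)z$. When you wrote $z(4+p^{-2})$ you dropped the $-z$ coming from $-pqr/(p^2q^2)$.
\end{itemize}
Your displayed second term should also read $z\{(2m-1)zp^2q^2+q^2(1+4p^2)\}/(p^2q^2)$, although you then collected it correctly. With these corrections, and your constant terms (which are right), the paper's $\partial_\tau r$ gives exactly $z'=2(m-1)z^2+(y+2)z+(m-y)t-\kappa(y-1)t$.

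Your recomputation of $\partial_\tau r$, by contrast, is wrong. Computing $\partial_\tau r=f^{-2}\partial_s\sigma-\sigma f^{-3}\partial_s f$ with \eqref{8.4} and $\partial_sf=pqf^2$ gives $-r^2-1+(m-1)p^2-\kappa-pqr$. The coefficient on $r^2$ is $-1$, as your own remark that $\sigma^2/f^2$ contributes $r^2$ already indicates. Using $-2r^2$ would give the coefficient $2m-3$ on $z^2$, contradicting the statement. Moreover, the $r^2$ term feeds only into the $z^2$ coefficient, so no change there could explain a discrepancy in the $z$ coefficient. Your closing expectation that the coefficients will reduce to $2(m-1)$ and $y+2$ therefore rests on an incorrect formula and would not go through.

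There is also a slip in the $\ell$ step. Dividing \eqref{ellpqr}, after substituting $r=zpq$, by $p^2q^2$ puts $\ell\,p^{-2}q^{-2}$ on the left. The correct identity is $\ell\,(y-1)t=-mz-2y-m(y-1)t$, which yields the displayed fraction. The identity $\ell=-mz-2-2p^{-2}-mp^{-2}q^{-2}$ as you wrote it is false.
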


\begin{proof}
Since 
\[
\partial_\tau y=2mrp^{-2}+2p^{-3}q(1+p^2),
\]
we have
\begin{align*}
y'
=\frac{1}{pq}\partial_\tau y
=2(y-1)(mz+y).
\end{align*}
Substituting 
\[
\partial_\tau t=2(m-1)rq^{-2}+6pq^{-1}
\]
into $t'=\frac{\partial_\tau t}{pq}$, we obtain 
\[
t'=2t\{(m-1)z+3\}.
\]
A long but straightforward computation shows
\[
z'=2(m-1)z^2+(y+2)z+(m-y)t-\kappa(y-1)t.
\]
Substituting
\[
p^2=\frac{1}{y-1},\qquad q^2=t^{-1},\quad\text{and}\quad r=pqz
\]
into \eqref{ellpqr}, we have
\[
\ell=-\frac{mz+2y+m(y-1)t}{(y-1)t}.
\]
By \eqref{elltau}, we have
\[
\ell'=\frac{\partial_\tau \ell}{pq}=-2\ell.
\]
Also, 
\[
\kappa'=-2\kappa.
\]
\end{proof}

The above ODEs yield the following identity.

\begin{proposition}\label{p12.1}
On every sufficiently small connected open subset of $U$, the following equality holds
\begin{align*}
F_0(y,t,z,\kappa)
&:=2m^2z^2+\{(5m-4)y+6m\}z+12y\\
&\qquad-m(m+y-2)t+m\kappa(y-1)t=0.
\end{align*}
\end{proposition}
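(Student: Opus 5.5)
The plan is to obtain $F_0=0$ by making the two last identities of Lemma~\ref{l11.1} compatible with each other. One identity expresses $\ell$ algebraically in terms of $(y,t,z)$; the other says $\ell'=-2\ell$. Differentiating the algebraic expression along $\rho$, and using the ODEs for $y,t,z$, gives a second expression for $\ell'$. Equating the two removes $\ell$ and leaves a polynomial relation in $y,t,z,\kappa$, which should be $F_0$.

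Concretely, I write
\[
\ell=-\frac{N}{D},\qquad N=mz+2y+m(y-1)t,\qquad D=(y-1)t .
\]
We have $D\neq0$ on $U$, since $y>1$ and $t>0$. The relation $\ell'=-2\ell$ is then equivalent to
\[
N'D-ND'+2ND=0,\qquad\text{i.e.}\qquad N'=N\Big(\frac{D'}{D}-2\Big).
\]
From the formulas for $y'$ and $t'$ in Lemma~\ref{l11.1} I get
\[
\frac{D'}{D}=\frac{y'}{y-1}+\frac{t'}{t}=2(mz+y)+2\{(m-1)z+3\}=2\{(2m-1)z+y+3\}.
\]
So the condition to verify reads
\[
N'=2N\{(2m-1)z+y+2\}.
\]

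Next I compute the left side:
\[
N'=mz'+2y'+m\{y't+(y-1)t'\}.
\]
Into this I substitute $y'$, $t'$ and the expression for $z'$ from Lemma~\ref{l11.1}. Then I expand both sides as polynomials in $z,y,t,\kappa$ and collect terms.

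The only source of $\kappa$ is $mz'$, which contributes $-m\kappa(y-1)t$. This suggests that the relation, after moving everything to one side, equals $-F_0$, possibly up to a harmless constant multiple. As checks, I will match coefficients by monomial:
\begin{itemize}
\item the $z^2$-terms must combine to $2m^2z^2$;
\item the $yz$ and $z$ terms must give $\{(5m-4)y+6m\}z$;
\item the $t$-terms must collapse to $-m(m+y-2)t$;
\item the pure $y$ terms must give $12y$.
\end{itemize}
The quadratic terms in $t$ that come from $y't$ and $(y-1)t'$ should cancel against the right-hand side. Finally, since everything holds on a connected open subset of $U$ where the parametrization by $\rho$ is valid (because $d\rho/d\tau=pq\neq0$), the identity holds pointwise there.

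The main obstacle is the bookkeeping in this expansion, in particular the cross terms $zt$, $yt$ and $yzt$. If the collected polynomial does not match $F_0$ exactly, I would first recheck the coefficients of $z'$, whose derivation was only sketched in Lemma~\ref{l11.1}. I would also confirm that any overall nonzero factor, such as $-1$ or $2$, is simply divided out.
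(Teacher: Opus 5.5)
Your proposal is correct and is essentially the paper's own argument. The paper also writes $\ell=-N/D$ with the same $N$ and $D$, turns $\ell'=-2\ell$ into $N'D-ND'+2ND=0$, and expands using Lemma~\ref{l11.1} to get $-(y-1)t\,F_0=0$; this is exactly $D$ times your identity $N'-2N\{(2m-1)z+y+2\}=-F_0$, which does check out. One small bookkeeping correction: there are no terms quadratic in $t$, and the terms that cancel against the right-hand side are the $zt$ and $y^2t$ terms.
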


\begin{proof}
For convenience, set 
\begin{align*}
N&=mz+2y+m(y-1)t,\\
D&=(y-1)t.
\end{align*}

By Lemma \ref{l11.1}, we have
\begin{equation}\label{ND}
N'D-ND'+2ND=0.
\end{equation}
By the definition of $N$ and $D$,
\[
N'=mz'+2y'+my't+m(y-1)t'
\]
and 
\[
D'=y't+(y-1)t'.
\]
Substituting these into \eqref{ND}, we obtain
\begin{align*}
0
&=(y-1)t
\Big(
m\{2(m-1)z^2+(y+2)z+(m-y)t\}-m\kappa(y-1)t\\
&\qquad\qquad\qquad+2(2+mt)(y-1)(mz+y)\\
&\qquad\qquad\qquad+2m(y-1)t\{(m-1)z+3\}\\
&\qquad\qquad\qquad+(mz+2y+m(y-1)t)\{2-2(mz+y)-2((m-1)z+3)\}
\Big)\\
&=-(y-1)t
\Big(
2m^2z^2+\{(5m-4)y+6m\}z\\
&\qquad\qquad\qquad\qquad+(12-mt)y-m(m-2)t+m\kappa(y-1)t
\Big).
\end{align*}
\end{proof}

Assume that $\mu\neq0$. Set 
\[
\beta=-\frac{mc}{\mu}.
\]
We note that, when $c=0$, all the calculations below remain valid for arbitrary $\lambda$ on setting $\beta=0$.
 
We introduce
\[
u=\frac{mz}{y},\qquad v=\frac{m^2t}{y}.
\]
Since $t>0$ and $y>1$, $v>0$.

\begin{lemma}\label{l13.1}
On every sufficiently small connected open subset of $U$, the following equalities hold
\begin{align}
y'&=2y(y-1)(u+1),\label{y'}\\
u'&=\frac{m-y}{m}(2u^2+v)+(4-y)u\label{u'}-\beta(u+2)-\frac{\beta(y-1)}{m}v,\\
v'&=2v\Big\{\frac{m-y}{m}u+4-y\Big\},\label{v'}
\end{align}
and 
\begin{align}\label{Fm}
&F_m(y,u,v)\\
&\quad:=2myu^2+\{(5m-4)y+6m+m\beta\}u+12m+2m\beta\notag\\
&\qquad-\{m+y-2-\beta(y-1)\}v=0.\notag
\end{align}
\end{lemma}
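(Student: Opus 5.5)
The plan is to rewrite Lemma~\ref{l11.1} and Proposition~\ref{p12.1} in the new variables $u=\frac{mz}{y}$ and $v=\frac{m^2t}{y}$. First, though, the curvature ratio $\kappa$ has to be removed using $\ell$. Since $\kappa=\frac{c}{f^2}$ and $\ell=\frac{\mu}{f^2}$, we have $\kappa=\frac{c}{\mu}\ell=-\frac{\beta}{m}\ell$. The formula for $\ell$ in Lemma~\ref{l11.1} then gives
\[
\kappa(y-1)t=\frac{\beta}{m}\bigl(mz+2y+m(y-1)t\bigr)=\beta\Bigl(z+\frac{2y}{m}+(y-1)t\Bigr).
\]
This identity is the only point where $\mu\neq0$ is used. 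When $c=0$, the $\kappa$-terms are simply absent, which corresponds to $\beta=0$.

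Next I would treat the three derivatives.

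For $y'$, substituting $mz=uy$ into $y'=2(y-1)(mz+y)$ immediately gives \eqref{y'}.

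For $v'$, I use the logarithmic derivative
\[
\frac{v'}{v}=\frac{t'}{t}-\frac{y'}{y}=2\{(m-1)z+3\}-2(y-1)(u+1).
\]
With $z=\frac{uy}{m}$, the coefficient of $u$ is
\[
\frac{(m-1)y}{m}-(y-1)=\frac{m-y}{m},
\]
and the constant term is $3-(y-1)=4-y$. This gives \eqref{v'}.

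For $u'$, I write
\[
u'=\frac{m}{y}z'-u\frac{y'}{y}=\frac{m}{y}z'-2(y-1)u(u+1).
\]
Into $\frac{m}{y}z'$ I insert the expression for $z'$ from Lemma~\ref{l11.1}, with the $\kappa$-term replaced by the identity above, and set $z=\frac{uy}{m}$, $t=\frac{vy}{m^2}$. The terms then regroup as follows.
\begin{itemize}
\item The $u^2$-coefficient is $2\bigl(\frac{(m-1)y}{m}-y+1\bigr)=\frac{2(m-y)}{m}$.
\item The $u$-coefficient is $(y+2)-2(y-1)=4-y$.
\item The $v$-term is $\frac{m-y}{m}v$.
\item The $\beta$-terms are $-\beta u-2\beta-\frac{\beta(y-1)}{m}v$.
\end{itemize}
Together these give \eqref{u'}.

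Finally, I multiply the identity $F_0=0$ of Proposition~\ref{p12.1} by $\frac{m}{y}$ and substitute the same expressions.
\begin{itemize}
\item $2m^2z^2\cdot\frac{m}{y}$ becomes $2myu^2$.
\item $\{(5m-4)y+6m\}z\cdot\frac{m}{y}$ becomes $\{(5m-4)y+6m\}u$.
\item $12y\cdot\frac{m}{y}$ becomes $12m$.
\item $-m(m+y-2)t\cdot\frac{m}{y}$ becomes $-(m+y-2)v$.
\item $m\kappa(y-1)t\cdot\frac{m}{y}$ becomes $\beta\bigl(mu+2m+(y-1)v\bigr)$.
\end{itemize}
Collecting terms yields exactly $F_m(y,u,v)=0$, which is \eqref{Fm}.

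No step is conceptually hard. The main risk is bookkeeping in the $u'$ computation, where the $z^2$, $yz$ and $\beta$ contributions must cancel and regroup correctly. I would check that step by expanding everything as a polynomial in $u$ and $v$ with coefficients in $y$ and $\beta$.
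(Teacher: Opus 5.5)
Your proposal is correct and follows essentially the same route as the paper: substitute $z=\frac{uy}{m}$ and $t=\frac{vy}{m^2}$ into Lemma~\ref{l11.1} and Proposition~\ref{p12.1}, and show $F_0=\frac{y}{m}F_m$. Your elimination of $\kappa$ via $\kappa=-\frac{\beta}{m}\ell$ and the $\ell$-formula is exactly the identity $\kappa=\beta\frac{m(u+2)}{(y-1)v}+\beta$ that the paper substitutes without derivation, so your version makes that step explicit.
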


\begin{proof}
By the definition of $u$,
\begin{align*}
u'
&=\frac{m}{y}
\Big\{
2(m-1)z^2+(y+2)z+(m-y)t
-\kappa(y-1)t\Big\}\\
&\qquad-\frac{mz}{y^2}
\Big\{
2(y-1)(mz+y)
\Big\}.
\end{align*}
Substituting 
\begin{equation}\label{zt}
z=\frac{1}{m}uy,\quad t=\frac{1}{m^2}vy,\quad\text{and}\quad \kappa=\beta\frac{m(u+2)}{(y-1)v}+\beta
\end{equation}
into the above equation, 
\[
u'=\frac{m-y}{m}(2u^2+v)+(4-y)u-\beta(u+2)-\frac{\beta(y-1)}{m}v.
\]
By the same argument, 
\begin{align*}
v'
&=\frac{m^2}{y}t'-\frac{m^2t}{y^2}y'\\
&=2v\Big\{\frac{m-y}{m}u+4-y\Big\}.
\end{align*}
By Lemma \ref{l11.1},
\[
y'=2(y-1)(mz+y)=2y(y-1)(u+1).
\]
Substituting \eqref{zt} into $F_0$, we obtain
\begin{align*}
F_0
&=2u^2y^2+\{(5m-4)y+6m\}\frac{y}{m}u
+12y-\frac{y}{m}vy-(m-2)v\frac{y}{m}\\
&\qquad+\frac{y}{m}\{\beta m(u+2)+\beta(y-1)v\}\\
&=\frac{y}{m}\Big\{2myu^2+\{(5m-4)y+6m+\beta m\}u\\
&\qquad+12m+2\beta m-\{m+y-2-\beta(y-1)\}v
\Big\}\\
&=\frac{y}{m}F_m.
\end{align*}
Therefore, by Proposition \ref{p12.1}, we complete the proof.
\end{proof}

Considering Lemma \ref{l13.1}, we introduce the following vector field
\begin{align*}
\mathcal{X}_m
&=2y(y-1)(u+1)\partial_y\\
&\quad+\Big\{\frac{m-y}{m}(2u^2+v)+(4-y)u\\
&\qquad\quad
-\beta(u+2)-\frac{\beta(y-1)}{m}v\Big\}\partial_u\\
&\quad+2v\Big\{\frac{m-y}{m}u+4-y\Big\}\partial_v.
\end{align*}
Any smooth function $G(y,u,v)$ satisfies  
$G'=\mathcal{X}_m G$ along an integral curve of $\mathcal{X}_m$.

Set 
\begin{align*}
\tilde F_m&=2myu^2+\{(5m-4)y+6m+\beta m\}u+12m+2\beta m,\\
\tilde{\mathcal{X}}_m
&=2y(y-1)(u+1)\partial_y\\
&\quad+\Big\{\frac{m-y}{m}(2u^2+v)+(4-y)u
-\beta(u+2)-\frac{\beta(y-1)}{m}v\Big\}\partial_u.
\end{align*}
Set 
\[
d_m=m+y-2-\beta(y-1).
\]
Assume that $d_m\neq0$. We note that, in Theorems \ref{main} and \ref{sub}, $d_m>0$.
Lemma \ref{l13.1} yields
\begin{equation}\label{vtFm}
v=\frac{\tilde F_m}{d_m}.
\end{equation}
Hence we have
\begin{align*}
\tilde{\mathcal{X}}_m
&=2y(y-1)(u+1)\partial_y\\
&\quad
+\Big\{\frac{m-y}{m}
\Big(2u^2+\frac{\tilde F_m}{d_m}\Big)+(4-y)u
-\beta(u+2)-\frac{\beta(y-1)\tilde F_m}{m d_m}\Big\}\partial_u\\
&=2y(y-1)(u+1)\partial_y\\
&\qquad+\frac{1}{m d_m}
\Big\{
D_2u^2+D_1u+D_0\Big\}\partial_u,
\end{align*}
where
\begin{align*}
D_2
&=2\{-(m+1)y^2+(m^2+2)y+m(m-2)\}\\
&\quad
+2\beta
\{(-m+1)y^2-y+m\},
\end{align*}
\begin{align*}
D_1
&=2\{(-3m+2)y^2+2m(m-1)y+m(5m-4)\}\\
&\quad
+4\beta\{(-m+1)y^2-(2m+1)y+3m\},
\end{align*}
\begin{align*}
D_0=-12my+12m^2+16m\beta(-y+1).
\end{align*}
Set 
\begin{align}
P_{0,m}
&=A_3u^3+A_2u^2+A_1u+A_0,\label{P0m}\\
P_{1,m}
&=md_m\tilde{\mathcal{X}}_mP_{0,m},\label{P1m}
\end{align}
where
\begin{align*}
A_3
&=2my^2\Big\{(2m+1)y-3m^2+2m-2+\beta \{(2m-1)y-2m+1\}\Big\},\\
A_2
&=-my
\Big\{(-15m+12)y^2+(22m^2-54m+8)y+11m^2+24m-8\\
&\qquad\quad +\beta\{(-15m+12)y^2+4(m-4)y+m^2+10m+4\}\Big\},\\
A_1
&=2(5m^2-9m+4)y^3+4m(-5m^2+30m-16)y^2\\
&\qquad+6m(-7m^2+8)y+6m^2(m-8)\\
&\quad+
\beta\{
(15m^2-22m+8)y^3+(27m^2+14m-8)y^2\\
&\qquad\qquad+m(-3m^2-32m+8)y+m^2(m-8)
\},\\
A_0
&=6m\{(5m-4)y^2+m(-7m+20)y+2m(m-8)\}\\
&\quad
-2\beta m
\{-4(5m-4)y^2+(m^2+12m-16)y-m(m-8)\}.
\end{align*}

\begin{lemma}\label{l14.1}
On every sufficiently small connected open subset of $U$, the following equalities hold
\[
P_{0,m}(y,u)=0\quad \text{and}\quad P_{1,m}(y,u)=0.
\]
\end{lemma}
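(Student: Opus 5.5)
The plan is to differentiate the constraint along the flow. By Lemma \ref{l13.1}, $F_m(y,u,v)=0$ holds identically on every sufficiently small connected open subset of $U$, and $(y,u,v)$ evolves in $\rho$ as an integral curve of $\mathcal{X}_m$. Hence $\mathcal{X}_mF_m=(F_m)'=0$ along the curve. The first step is to compute $\mathcal{X}_mF_m$ explicitly. Since $F_m=\tilde F_m-d_mv$ and $v$ enters linearly,
\[
\mathcal{X}_mF_m=\tilde{\mathcal{X}}_m\tilde F_m+v\,(\partial_u\tilde F_m)\Big(\tfrac{m-y}{m}-\tfrac{\beta(y-1)}{m}\Big)-v\,y'\,\partial_y d_m-d_m\,v'.
\]
Here $\partial_yd_m=1-\beta$, and $v'$ is given by \eqref{v'}. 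This is a polynomial in $(y,u,v)$ that is at most quadratic in $v$.

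The second step eliminates $v$. Using \eqref{vtFm}, substitute $v=\tilde F_m/d_m$; this is legitimate because $d_m\neq0$. Multiplying by $m d_m^2$ (or by an appropriate power of $d_m$) clears denominators and gives a polynomial identity in $(y,u)$ alone. The claim is that, after removing a harmless nonvanishing factor such as a power of $y$, $(y-1)$, or $d_m$, this polynomial is exactly $P_{0,m}$. It should be cubic in $u$, since $\tilde F_m$ is quadratic in $u$ and the $u'$-coefficient contributes $u^2$ terms. The verification is a direct expansion and comparison of the coefficients $A_3,A_2,A_1,A_0$, carried out separately for the $\beta$-free and $\beta$-linear parts. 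Terms in $\beta^2$ must cancel, or else be absorbed by factoring out $d_m$. Dividing out such factors is allowed because $y>1$ and $d_m\neq0$ on the region considered.

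Once $P_{0,m}(y,u)=0$ holds identically along the curve, the second identity follows quickly. $P_{0,m}$ does not depend on $v$, and on the constraint surface the $(y,u)$-component of the flow is precisely $\tilde{\mathcal{X}}_m$ with $v$ replaced by $\tilde F_m/d_m$. Therefore
\[
0=(P_{0,m})'=\tilde{\mathcal{X}}_mP_{0,m}=\frac{P_{1,m}}{m d_m},
\]
which gives $P_{1,m}=0$.

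The main obstacle is the first identification: showing that $\mathcal{X}_mF_m$, after substituting $v$, equals a nonzero multiple of the stated $P_{0,m}$. This requires a long symbolic expansion in which the common factor has to be found correctly. I would first carry it out with $\beta=0$ to determine the normalization, for instance by matching $A_3=2my^2\{(2m+1)y-3m^2+2m-2\}$. I would then track the terms linear in $\beta$ and confirm that the $\beta^2$ contributions collapse, most likely through a factor $d_m$.
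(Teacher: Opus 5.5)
Your proposal is correct and follows the paper's proof: differentiate $F_m\equiv 0$ along $\mathcal{X}_m$, substitute $v=\tilde F_m/d_m$ to obtain $\mathcal{X}_mF_m=-\frac{2}{md_m}P_{0,m}$, and then conclude $P_{1,m}=md_m\tilde{\mathcal{X}}_mP_{0,m}=md_m(P_{0,m})'=0$. One bookkeeping point: the paper's $\tilde{\mathcal{X}}_m$ already contains the $v$-terms of $u'$, so your displayed formula for $\mathcal{X}_mF_m$ counts them twice unless $\tilde{\mathcal{X}}_m$ there means its $v=0$ part; also, $\mathcal{X}_mF_m$ is affine in $v$, so a single factor $md_m$ clears the denominators, and the $\beta^2$ terms do cancel, as you expected.
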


\begin{proof}
We calculate $\mathcal{X}_mF_m$.
A straightforward computation shows that 
\begin{align*}
\partial_yF_m
&=2mu^2+(5m-4)u-v+\beta v,\\
\partial_uF_m
&=4myu+(5m-4)y+6m+\beta m,\\
\partial_vF_m
&=-d_m.
\end{align*}
Hence
\begin{align*}
\mathcal{X}_mF_m
&=2y(y-1)(u+1)\{2mu^2+(5m-4)u-v+\beta v\}\\
&\quad+\frac{1}{m d_m}\Big\{D_2u^2+D_1u+D_0\Big\}\{4myu+(5m-4)y+6m+\beta m\}\\
&\quad-2v\Big\{\frac{m-y}{m}u+4-y\Big\}d_m\\
&=\frac{1}{md_m}
\Big\{
2md_my(y-1)(u+1)\{2mu^2+(5m-4)u-v+\beta v\}\\
&\qquad\quad+\Big\{D_2u^2+D_1u+D_0\Big\}\{4myu+(5m-4)y+6m+\beta m\}\\
&\qquad\quad-2\tilde F_m\Big\{(m-y)u+(4-y)m\Big\}d_m
\Big\}\\
&=-\frac{2}{md_m}P_{0,m}(y,u).
\end{align*}
Since $F_m(y(\rho),u(\rho),v(\rho))\equiv0$, we have $\mathcal{X}_mF_m=0$ and $P_{0,m}=0$. 
By the definition of $P_{1,m}$, we also obtain $P_{1,m}=0$.
\end{proof}

We give an expression for $P_{1,m}$, which will be used later.

A long but straightforward computation shows 
\begin{align*}
\partial_y(P_{0,m})&=B_3u^3+B_2u^2+B_1u+B_0,\\
\partial_u(P_{0,m})&=C_2u^2+C_1u+C_0,
\end{align*}
where
\begin{align*}
B_3
&=6m(2m+1)y^2+4m(-3m^2+2m-2)y\\
&\qquad+\beta\{6m(2m-1)y^2-4m(2m-1)y\},
\end{align*}
\begin{align*}
B_2
&=m\{9(5m-4)y^2+4(-11m^2+27m-4)y-11m^2-24m+8\}\\
&\qquad+\beta m\{9(5m-4)y^2-8(m-4)y-m^2-10m-4\},
\end{align*}
\begin{align*}
B_1
&=2\Big\{3(5m^2-9m+4)y^2+4m(-5m^2+30m-16)y\\
&\qquad -3m(7m^2-8)\Big\}\\
&\quad
+\beta \Big\{
3(15m^2-22m+8)y^2+2(27m^2+14m-8)y\\
&\qquad\qquad+m(-3m^2-32m+8)
\Big\},
\end{align*}
\begin{align*}
B_0
&=6m\{2(5m-4)y+m(-7m+20)\}\\
&\qquad
+2\beta m\{
8(5m-4)y-m^2-12m+16
\},
\end{align*}
\[
C_2=3A_3, \qquad C_1=2A_2,\quad \text{and}\quad C_0=A_1.
\]
Hence
\begin{align}\label{P1m2}
P_{1,m}(y,u)
&=md_m\tilde{\mathcal{X}}_m P_{0,m}(y,u)\\
&=2md_my(y-1)(u+1)
\Big\{
B_3u^3+B_2u^2+B_1u+B_0
\Big\}\notag\\
&\qquad+
\{
D_2u^2+D_1u+D_0
\}
\{
C_2u^2+C_1u+C_0
\}\notag.
\end{align}

\section{Resultant}

To show Theorems \ref{main} and \ref{sub}, we will use basic properties of the resultant (cf. \cite{CLO2005}).
\begin{definition}[page 77 in \cite{CLO2005}]
Given two polynomials $f,g\in \mathbb{R}[x]$ of positive degree, say
\begin{align*}
f&=a_\ell x^\ell+\cdots +a_0,\quad a_\ell\neq0,\quad \ell>0,\\
g&=b_mx^m+\cdots +b_0,\quad b_m\neq0,\quad m>0.
\end{align*}
Then the {\em resultant} of $f$ and $g$, denoted ${\rm Res}(f,g)$, is the $(\ell+m)\times(\ell+m)$ determinant
\begin{align*}
{\rm Res}(f,g) ={\rm det} 
\begin{pmatrix}
a_\ell &  & & & b_m &  &  & \\ 
a_{\ell-1} & a_\ell& & & b_{m-1} & b_m &  & \\
a_{\ell-2} & a_{\ell-1}& \ddots & & b_{m-2} & b_{m-1} & \ddots & \\
\vdots & a_{\ell-2}& \ddots& a_\ell& \vdots & b_{m-2} & \ddots &b_m \\
a_0 & \vdots& \ddots& a_{\ell-1}& b_0 & \vdots & \ddots &b_{m-1} \\
    & a_0& & a_{\ell-2}&  & b_0 &  &b_{m-2} \\
    & & \ddots& \vdots&  &  & \ddots & \vdots\\
    & & & a_0&  &  &  &b_0 \\
\end{pmatrix},
\end{align*}
where the first $m$ columns correspond to $f$, and the last $\ell$ columns correspond to $g$.
Here the blank spaces are filled with zeros.
\end{definition}

We recall the standard properties of the resultant.
\begin{lemma}[pages 77--78 in \cite{CLO2005}]\label{lresultant}
Let $P,Q\in \mathbb{R}[y,u]$ be polynomials of positive degree in $u$, and set 
\[
\mathcal{R}(y)={\rm Res}_u(P(y,u),Q(y,u))\in\mathbb{R}[y].
\]
Then the following statements hold.

\begin{enumerate}
\item
If $y_0,u_0\in\mathbb{R}$ satisfy
\[
P(y_0,u_0)=Q(y_0,u_0)=0,
\]
then 
\[
\mathcal{R}(y_0)=0.
\]

\item
Suppose that specialization at $y=y_0$ lowers neither the degree of $P$ nor that of $Q$ in $u$.
Then 
\[
\mathcal{R}(y_0)={\rm Res}_u(P(y_0,u),Q(y_0,u)).
\]
In particular, $\mathcal{R}(y_0)=0$ if and only if $P(y_0,u)$ and $Q(y_0,u)$ have a nontrivial common factor in $\mathbb{R}[u]$.
\end{enumerate}
\end{lemma}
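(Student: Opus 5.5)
The plan is to work with the Sylvester matrix. Let $\ell=\deg_u P$ and $k=\deg_u Q$, and write
\[
P=\sum_{i=0}^{\ell}a_i(y)u^i,\qquad Q=\sum_{j=0}^{k}b_j(y)u^j,\qquad a_i,b_j\in\mathbb{R}[y].
\]
Then $\mathcal{R}(y)$ is the determinant of the $(\ell+k)\times(\ell+k)$ matrix $S(y)$ of the Definition. Its entries are the $a_i(y),b_j(y)$, so $\mathcal{R}\in\mathbb{R}[y]$. Its columns are the coefficient vectors, with respect to the basis $u^{\ell+k-1},\dots,u,1$, of
\[
u^{k-1}P,\dots,P,\;u^{\ell-1}Q,\dots,Q .
\]

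For (1), I would first establish the cofactor identity: there are $\mathcal{A},\mathcal{B}\in\mathbb{R}[y,u]$ with $\deg_u\mathcal{A}<k$, $\deg_u\mathcal{B}<\ell$ and
\[
\mathcal{A}P+\mathcal{B}Q=\mathcal{R}(y).
\]
To obtain it, add $u^{\ell+k-1-r}$ times row $r$ to the last row for every $r<\ell+k$. This does not change the determinant, and the last row becomes
\[
(u^{k-1}P,\dots,P,\;u^{\ell-1}Q,\dots,Q).
\]
Expanding along this row expresses $\mathcal{R}$ as a combination of these entries, with coefficients the cofactors, which lie in $\mathbb{R}[y]$. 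This gives the identity. Evaluating it at $(y_0,u_0)$, where $P$ and $Q$ both vanish, gives $\mathcal{R}(y_0)=0$.

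For (2), the hypothesis is that $a_\ell(y_0)\neq0$ and $b_k(y_0)\neq0$. Then $P(y_0,u)$ and $Q(y_0,u)$ still have degrees $\ell$ and $k$, so their Sylvester matrix has the same size and shape as $S(y)$. It is exactly $S(y_0)$, obtained by evaluating each entry at $y_0$. Since the determinant is a polynomial in the entries, $\mathcal{R}(y_0)=\operatorname{Res}_u(P(y_0,u),Q(y_0,u))$.

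It then remains to prove, over the field $\mathbb{R}$, that for $p,q\in\mathbb{R}[u]$ of degrees $\ell,k>0$ we have $\operatorname{Res}(p,q)=0$ if and only if $p,q$ have a nonconstant common factor. View $S$ as the matrix of the linear map
\[
(A,B)\mapsto Ap+Bq,\qquad \deg A<k,\ \deg B<\ell,
\]
into polynomials of degree $<\ell+k$. So the determinant vanishes iff there exists $(A,B)\neq0$ with $Ap=-Bq$.

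Suppose first that $h$ is a nonconstant common factor. Then $A=q/h$, $B=-p/h$ is such a pair, since $\deg A<k$ and $\deg B<\ell$. Conversely, suppose such $(A,B)$ exists and $\gcd(p,q)=1$. Then $q\mid Ap$ forces $q\mid A$. Because $\deg A<k$, this gives $A=0$, hence $Bq=0$ and $B=0$, a contradiction.

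The only delicate point is the degree bookkeeping in (2). The argument genuinely needs that the leading coefficients do not vanish at $y_0$: if they did, the specialized Sylvester matrix would have a different size, and $S(y_0)$ would no longer be the resultant of the specialized polynomials. Everything else is routine linear algebra.
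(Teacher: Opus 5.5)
Your argument is correct and is essentially the standard Sylvester-matrix proof behind the reference the paper cites; the paper itself gives no proof and only quotes Cox--Little--O'Shea. You use the cofactor identity $\mathcal{A}P+\mathcal{B}Q=\mathcal{R}$ for (1), entrywise specialization of the determinant for (2), and the injectivity of $(A,B)\mapsto Ap+Bq$ for the common-factor criterion. The only blemish is an off-by-one in the row operation: since row $r$ carries the power $u^{\ell+k-1-r}$, the rows are $r=0,\dots,\ell+k-1$, so you should add $u^{\ell+k-1-r}$ times row $r$ only for $r\le \ell+k-2$, not the last row to itself.
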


The following lemma is a direct consequence of the above lemma.

\begin{lemma}\label{lYconst}
Let $Y,U:I\rightarrow \mathbb{R}$ be continuous functions on a connected interval $I$.
Assume that $P,Q\in\mathbb{R}[y,u]$ are polynomials in $u$ whose coefficients belong to $\mathbb{R}[y]$, and both $P$ and $Q$ have positive degree in $u$.
Set 
\[
\mathcal{R}(y)={\rm Res}_u(P(y,u),Q(y,u)).
\]
If 
\[
P(Y(\rho),U(\rho))=Q(Y(\rho),U(\rho))=0\qquad(\rho\in I)
\]
and $\mathcal{R}$ is not the zero polynomial, then $Y$ is constant on $I$.
\end{lemma}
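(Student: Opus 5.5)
We argue by contradiction: assume $Y$ is not constant on $I$ and show that this forces $\mathcal{R}\equiv 0$. The proof uses only part (1) of Lemma~\ref{lresultant}, applied pointwise along the curve, together with the fact that a nonzero polynomial in one variable has finitely many roots.

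The first step is pointwise vanishing. Fix $\rho\in I$ and set $y_0=Y(\rho)$, $u_0=U(\rho)$. By hypothesis $P(y_0,u_0)=Q(y_0,u_0)=0$, and both $P$ and $Q$ have positive degree in $u$, so $\mathcal{R}={\rm Res}_u(P,Q)\in\mathbb{R}[y]$ is defined. Part (1) of Lemma~\ref{lresultant} then gives $\mathcal{R}(Y(\rho))=0$ for every $\rho\in I$. Note that part (1) needs no non-degeneracy assumption on the leading coefficients at $y_0$. This is the one point requiring care, since the resultant is computed formally over $\mathbb{R}[y]$ before specializing. The justification is that ${\rm Res}_u(P,Q)$ lies in the ideal generated by $P$ and $Q$ in $\mathbb{R}[y,u]$: there are polynomials $a,b\in\mathbb{R}[y,u]$ with $\mathcal{R}=aP+bQ$. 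Evaluating this identity at $(y_0,u_0)$ gives $\mathcal{R}(y_0)=0$ directly.

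The second step is a connectedness argument. Since $\mathcal{R}$ is not the zero polynomial, its zero set $Z\subset\mathbb{R}$ is finite. By the first step, $Y(I)\subset Z$. Because $Y$ is continuous and $I$ is connected, $Y(I)$ is a connected subset of $\mathbb{R}$, that is, an interval. A nonempty interval contained in a finite set must be a single point, so $Y$ is constant on $I$.

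None of the steps poses a real obstacle; the only subtlety is the independence of part (1) from degree drops, which the ideal-membership identity settles. In the later application, $P$ and $Q$ will be $P_{0,m}$ and $P_{1,m}$ from Lemma~\ref{l14.1}, with $Y=y(\rho)$ and $U=u(\rho)$. The real work there will be verifying that the resultant is not identically zero, which is a computation outside the present lemma.
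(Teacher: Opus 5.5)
Your proposal is correct and matches the paper's proof essentially step for step: part (1) of Lemma~\ref{lresultant} gives $\mathcal{R}(Y(\rho))=0$ for every $\rho$, and then $Y(I)$ is a connected subset of the finite zero set of $\mathcal{R}$, hence a single point. The ideal-membership justification $\mathcal{R}=aP+bQ$ in $\mathbb{R}[y,u]$ makes part (1) self-contained where the paper simply cites \cite{CLO2005}, and the announced ``contradiction'' framing is never used (the argument you give is direct), but neither affects correctness.
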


\begin{proof}
Fix $\rho\in I$. Set 
\[
y_0=Y(\rho),\qquad u_0=U(\rho).
\] 
By the assumption 
\[
P(y_0,u_0)=Q(y_0,u_0)=0.
\]
Hence (1) of Lemma \ref{lresultant} gives $\mathcal{R}(Y(\rho))=\mathcal{R}(y_0)=0$.
Since $\rho$ is arbitrary, it follows that
\[
\mathcal{R}(Y(\rho))=0
\]
for every $\rho\in I$. 
Therefore, 
\[
Y(I)\subset Z(\mathcal{R})=\{y\in\mathbb{R} ~\mid~ \mathcal{R}(y)=0\}.
\]

Since $\mathcal{R}$ is not the zero polynomial, the set $Z(\mathcal{R})$ is finite.
On the other hand, since $Y$ is continuous on the connected interval $I$, $Y(I)$ is connected.
Hence $Y(I)$ is a connected subset of the finite set $Z(\mathcal{R})$. Therefore $Y(I)$ consists of a single point, and hence $Y$ is constant on $I$.
\end{proof}

\section{Proof of Theorem \ref{main}}

In this section, we show Theorem \ref{main}. 
Since $c=0$, we only have to consider the case $\beta=0$. First, we show that the set $U$ is empty. We divide the proof into two cases.

\subsection{The case $m\neq4$}
We evaluate $P_{0,m}(y,u)$ at $y=1$. By \eqref{P0m},
\begin{align*}
P_{0,m}(1,u)
&=-(m-1)
\{
2m(3m-1)u^3+3m(11m-4)u^2\\
&\qquad\qquad\qquad+2(28m^2-13m+4)u+6m(5m-4)
\}.
\end{align*}
By \eqref{P1m2},
\begin{align*}
P_{1,m}(1,u)
&=-4(m-1)^2
\{
3m(3m-1)u^2+3m(11m-4)u+28m^2-13m+4
\}\\
&\qquad\qquad\qquad\times
(u+2)\{(2m-1)u+3m\}.
\end{align*}

For convenience, set 
\begin{align*}
\mathcal{C}_m
&=2m(3m-1)u^3+3m(11m-4)u^2\\
&\qquad\qquad\qquad+2(28m^2-13m+4)u+6m(5m-4),\\
\mathcal{D}_m
&=(u+2)\{(2m-1)u+3m\}\mathcal{Q}_m,
\end{align*}
where
\begin{align*}
\mathcal{Q}_m
&=3m(3m-1)u^2+3m(11m-4)u+28m^2-13m+4.
\end{align*}

With this notation,
\begin{align*}
P_{0,m}(1,u)
&=-(m-1)\mathcal{C}_m(u),\\
P_{1,m}(1,u)
&=-4(m-1)^2\mathcal{D}_m(u).
\end{align*}

\begin{lemma}\label{l14.3}
For $m\geq3$ with $m\neq4$, the polynomials $\mathcal{C}_m$ and $\mathcal{D}_m$ are relatively prime in $\mathbb{R}[u]$.
\end{lemma}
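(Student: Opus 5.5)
The plan is to exploit the factorization $\mathcal{D}_m=(u+2)\{(2m-1)u+3m\}\mathcal{Q}_m$. Over $\mathbb{R}[u]$, $\mathcal{C}_m$ and $\mathcal{D}_m$ are relatively prime if and only if $\mathcal{C}_m$ shares no common factor with each of the three factors. So I will check the two linear factors by evaluating $\mathcal{C}_m$ at their roots, and treat the quadratic factor $\mathcal{Q}_m$ with a resultant in $u$, regarded as a polynomial in $m$.

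\emph{Linear factor $u+2$.} A direct substitution gives
\[
\mathcal{C}_m(-2)=-16m(3m-1)+12m(11m-4)-4(28m^2-13m+4)+6m(5m-4)=2m^2-4m-16=2(m-4)(m+2).
\]
This is nonzero exactly when $m\neq4$, and it is the only place where the hypothesis $m\neq 4$ enters. It also explains why dimension four must be handled separately.

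\emph{Linear factor $(2m-1)u+3m$.} Substitute $u=-3m/(2m-1)$ and clear denominators by multiplying by $(2m-1)^3$. Expanding the four terms as polynomials in $m$, I expect all coefficients except the top one to cancel. My computation gives
\[
(2m-1)^3\,\mathcal{C}_m\Big(-\tfrac{3m}{2m-1}\Big)=-27m^4,
\]
which is nonzero for every $m\geq3$. I will redo this expansion carefully, but the key point is only that the result is a nonzero polynomial with no roots among integers $m\geq3$.

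\emph{Quadratic factor $\mathcal{Q}_m$.} This is the main obstacle. I would compute $\mathcal{R}(m)={\rm Res}_u(\mathcal{C}_m,\mathcal{Q}_m)$, the $5\times5$ Sylvester determinant from the definition above, as an explicit polynomial in $m$. Equivalently, I can reduce $\mathcal{C}_m$ modulo $\mathcal{Q}_m$ to a remainder $R_1(m)u+R_0(m)$. If $R_1\neq0$, the only candidate common root is $u=-R_0/R_1$, and plugging it into $\mathcal{Q}_m$ gives a polynomial in $m$ proportional to $\mathcal{R}(m)$. The leading coefficients $2m(3m-1)$ and $3m(3m-1)$ do not vanish for $m\geq3$, so part~(2) of Lemma~\ref{lresultant} applies. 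It therefore suffices to show that $\mathcal{R}(m)\neq0$ for all integers $m\geq3$.

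To do this I would first factor out obvious powers of $m$ and of $(m-1)$ from $\mathcal{R}(m)$. Then I would show the remaining factor has no integer roots $m\geq3$. I would try the rational root theorem first: the candidate roots are divisors of the constant term over divisors of the leading coefficient, so only finitely many need checking. If that is inconclusive, I would show by grouping terms that the remaining factor has constant sign for $m\geq3$, for instance by writing it in the variable $m-3$ with all coefficients of one sign. Combining the three cases gives that $\mathcal{C}_m$ and $\mathcal{D}_m$ are relatively prime whenever $m\geq3$ and $m\neq4$.
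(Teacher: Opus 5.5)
Your plan is essentially the paper's proof. The paper evaluates $\mathcal{C}_m$ at $-2$ and at $-\frac{3m}{2m-1}$ and gets exactly your values, $2(m-4)(m+2)$ and $-27m^4/(2m-1)^3$. It then follows your remainder variant: it writes $3(3m-1)\mathcal{C}_m=\{2(3m-1)u+11m-4\}\mathcal{Q}_m-(\mathcal{A}_mu+\mathcal{B}_m)$ with $\mathcal{A}_m=27m^3+4m^2-52m+16>0$, and shows $\mathcal{A}_m^2\,\mathcal{Q}_m(-\mathcal{B}_m/\mathcal{A}_m)=-(3m-1)\mathcal{H}(m)\neq0$ for an explicit degree-seven integer polynomial $\mathcal{H}$. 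Equivalently, your resultant is $-m^2(3m-1)\mathcal{H}(m)$, so the factor to strip off is $3m-1$, not a power of $m-1$.

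One caution about the step you left open. We have $\mathcal{H}(3)=-545879<0<\mathcal{H}(4)=6356992$, so $\mathcal{H}$ has a real root in $(3,4)$. Your fallback of proving constant sign on $m\geq3$ via the shift $m-3$ therefore cannot work. The paper instead checks $m=3$ directly and shows that $\mathcal{H}(n+4)$ has all positive coefficients. Your integer-root route also works: $\mathcal{H}(0)=1024$ and $3\nmid 1024$, so $m=3$ is excluded and only powers of $2$ remain to check.
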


\begin{proof}
We first confirm that $\mathcal{C}_m(-2)\neq0$ and $\mathcal{C}_m\Big(-\frac{3m}{2m-1}\Big)\neq0$.
By the definition of $\mathcal{C}_m$, we have
\[
\mathcal{C}_m(-2)=2(m-4)(m+2)\neq0
\]
and
\[
\mathcal{C}_m\Big(-\frac{3m}{2m-1}\Big)=-\frac{27m^4}{(2m-1)^3}\neq0.
\]
Hence it remains to show that $\mathcal{C}_m$ and $\mathcal{Q}_m$ are relatively prime in $\mathbb{R}[u]$.

A long but straightforward computation shows that 
\[
\mathcal{C}_m(u)=
\frac{1}{3(3m-1)}
\Big(
\{2(3m-1)u+11m-4\}\mathcal{Q}_m(u)-(\mathcal{A}_mu+\mathcal{B}_m)
\Big),
\]
where 
\begin{align*}
\mathcal{A}_m
&=27m^3+4m^2-52m+16,\\
\mathcal{B}_m
&=38m^3+51m^2+24m-16.
\end{align*}
If $\mathcal{C}_m$ and $\mathcal{Q}_m$ have a nontrivial common factor in $\mathbb{R}[u]$, then the identity above implies that this common factor divides $\mathcal{A}_mu+\mathcal{B}_m$.
Since 
\[
\mathcal{A}_m=m(27m^2-52)+4m^2+16>0\qquad (m\geq3),
\]
the common factor is a constant multiple of $\mathcal{A}_mu+\mathcal{B}_m$.
Thus
\[
\mathcal{Q}_m\Big(-\frac{\mathcal{B}_m}{\mathcal{A}_m}\Big)=0.
\]
However, we have 
\begin{align*}
\mathcal{Q}_m\Big(-\frac{\mathcal{B}_m}{\mathcal{A}_m}\Big)
=-\frac{(3m-1)\mathcal{H}(m)}{\mathcal{A}_m^2}\neq0,
\end{align*}
where 
\begin{align*}
\mathcal{H}(m)
&=150m^7+3724m^6-615m^5-37728m^4\\
&\qquad-24880m^3+35520m^2-10752m+1024.
\end{align*}
In fact, for $m=3$, $\mathcal{H}(3)=-545879\neq0$, and if $m=n+4$, then 
\begin{align*}
\mathcal{H}(n+4)
&=150n^7+7924n^6+139161n^5+1179732n^4\\
&\quad+5383792n^3+13247232n^2+15814656n+6356992>0,
\end{align*}
for $n\geq0$, that is, for $m\geq4$.
Thus $\mathcal{C}_m$ and $\mathcal{D}_m$ are relatively prime in $\mathbb{R}[u].$
\end{proof}

For fixed $m\geq3$ with $m\neq4$, let 
\[
\mathcal{R}_m(y)={\rm Res}_u(P_{0,m}(y,u),P_{1,m}(y,u)).
\]
By Lemmas \ref{lresultant} and \ref{l14.3}, $\mathcal{R}_m(1)\neq0$.
Hence $\mathcal{R}_m$ is not the zero polynomial. Therefore Lemma \ref{lYconst} shows that $y$ is constant on the interval of $\rho$ under consideration.

\subsection{The case $m=4$}

It remains to consider the case $m=4$. In this case, $P_{0,m}$ and $P_{1,m}$ have the common factor $u+2$. In fact, by \eqref{P0m} and \eqref{P1m2}, we have
\begin{align*}
P_{0,4}
&=24(u+2)\mathcal{E}_0(y,u),\\
P_{1,4}
&=-48(u+2)\mathcal{E}_1(y,u),
\end{align*}
where
\begin{align*}
\mathcal{E}_0
&=3u^2y^3-14u^2y^2+2uy^3+4uy^2-44uy\\
&\qquad+8y^2-16y-16,\\
\mathcal{E}_1
&=9u^3y^5-296u^3y^4+868u^3y^3+112u^3y^2\\
&\qquad+20u^2y^5-400u^2y^4-264u^2y^3+4000u^2y^2+352u^2y\\
&\qquad-4uy^5+80uy^4-1992uy^3+3344uy^2+4864uy+128u\\
&\qquad-16y^4-1824y^2+4672y+768.
\end{align*}

In this case, $u+2$ does not vanish on the interval under consideration.
In fact, since $y>1$ and $v>0$,
\[
F_4(y,-2,v)=-(y+2)v\neq0.
\]
Therefore, $P_{0,4}=P_{1,4}=0$ is equivalent to 
\[
\mathcal{E}_0(y,u)=\mathcal{E}_1(y,u)=0.
\]
At $y=1$,
\begin{align}
\mathcal{E}_0(1,u)
&=-11u^2-38u-24,\label{E0}\\
\mathcal{E}_1(1,u)
&=3(7u+12)(33u^2+120u+100)\label{E1}.
\end{align}
We claim that these two polynomials are relatively prime in $\mathbb{R}[u]$.

Substituting $u=-\frac{12}{7}$ into $\mathcal{E}_0$, we have
\[
\mathcal{E}_0\Big(1,-\frac{12}{7}\Big)
=\frac{432}{49}\neq0,
\]
which implies that $7u+12$ is relatively prime to $\mathcal{E}_0(1,u)$.

It remains to consider 
\[
33u^2+120u+100=-3(-11u^2-38u-24)+2(3u+14).
\]
Thus any common factor of $\mathcal{E}_0(1,u)$ and $33u^2+120u+100$ must divide $3u+14$. If such a common factor is nontrivial, it is a constant multiple of $3u+14$. However,
\[
\mathcal{E}_0\Big(1,-\frac{14}{3}\Big)
=-\frac{776}{9}\neq0.
\]
Therefore $\mathcal{E}_0(1,u)$ and $\mathcal{E}_1(1,u)$ are relatively prime in $\mathbb{R}[u]$.

Set 
\[
\mathcal{R}_4(y)={\rm Res}_u(\mathcal{E}_0(y,u),\mathcal{E}_1(y,u)).
\]
By \eqref{E0} and \eqref{E1} and Lemma \ref{lresultant}, $\mathcal{R}_4(1)\neq0$. Thus $\mathcal{R}_4$ is not the zero polynomial. Lemma \ref{lYconst} gives that $y$ is constant on the interval of $\rho$ under consideration. 

\subsection{Final argument}
We have shown that, for $m\geq3$, $y$ is constant on every connected interval of $\rho$ on which the normalized variables are defined.
Hence $y'=0$. By \eqref{y'} and $y>1$, we have $u=-1$. 
By \eqref{vtFm}, $v$ is constant. Hence by \eqref{v'},
\[
0=v'=2v
\Big\{
-\frac{m-y}{m}+4-y
\Big\}.
\]
Thus
\[
y=\frac{3m}{m-1}.
\]
Substituting $u=-1$ and $y=\frac{3m}{m-1}$ into \eqref{Fm}, we have
\[
v=-\frac{3m(m-2)}{m^2+2}<0,
\]
which contradicts $v>0$. Therefore there exists no solution $(y(\rho),u(\rho),v(\rho))$ of 
\eqref{y'}--\eqref{v'} satisfying \eqref{Fm}, $y>1$, and $v>0$.

The preceding argument shows that the mean curvature is constant.

\begin{proposition}\label{pkey}
Let $M^m$ be a connected pseudo-umbilical $\lambda$-biminimal submanifold in $\mathbb{E}^{m+2}$ with $m\geq3$. Then $|{\bf H}|$ is constant.
\end{proposition}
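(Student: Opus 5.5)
The argument in Sections 3 and 4 has already done the heavy lifting, so the plan is to assemble it into a global statement on the connected manifold $M$. We argue by contradiction on the open set $U=\{p\in M \mid f(p)>0,\ df_p\neq0\}$ and then use connectedness to pass from ``$df=0$ wherever $f>0$'' to constancy of $f$ on all of $M$.

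\emph{Step 1: $U$ is empty.} Suppose $U\neq\emptyset$ and pick a point of $U$. On a sufficiently small connected neighbourhood, Lemma \ref{l5.3} gives the adapted frame, and the subsequent lemma gives the warped product form $g=ds^2+\varphi(s)^2g_L$. Since $d\rho/d\tau=pq\neq0$ and $d\tau/ds=f>0$, the variable $\rho=\ln f$ is a strictly monotone function of $s$. Hence $(y,u,v)$ are smooth functions of $\rho$ on an interval $I$. Along $I$ they satisfy \eqref{y'}--\eqref{v'} and $F_m=0$ with $\beta=0$, and we have $y>1$ and $v>0$; in particular $d_m=m+y-2>0$, so \eqref{vtFm} applies.
\begin{itemize}
\item For $m\neq4$: by Lemma \ref{l14.1} we have $P_{0,m}=P_{1,m}=0$ along $I$. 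By Lemma \ref{l14.3} and Lemma \ref{lresultant}(2), the resultant $\mathcal{R}_m$ is nonzero at $y=1$. The leading coefficients $A_3(1)$ and that of $P_{1,m}(1,u)$ are nonzero, so specialization at $y=1$ does not drop the degree in $u$; this is the point to check. Then $\mathcal{R}_m\not\equiv0$, and Lemma \ref{lYconst} forces $y$ to be constant.
\item For $m=4$: the common factor $u+2$ is nonvanishing because $F_4(y,-2,v)=-(y+2)v\neq0$. We therefore pass to $\mathcal{E}_0$ and $\mathcal{E}_1$, and the same argument applies.
\end{itemize}
Once $y$ is constant, \eqref{y'} gives $u=-1$. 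Then \eqref{vtFm} makes $v$ constant, and \eqref{v'} gives $y=3m/(m-1)$. Substituting into \eqref{Fm} yields $v<0$, a contradiction. Hence $U=\emptyset$.

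\emph{Step 2: from $U=\emptyset$ to constancy of $f$.} Let $W=\{f>0\}$, which is open. Since $U=\emptyset$, we have $df=0$ on $W$, so $f$ is locally constant on $W$. Now $f=|{\bf H}|$ is continuous on $M$. Take a connected component $W_0$ of $W$; on it $f\equiv k>0$. Its closure in $M$ is contained in $\{f=k\}\subset W$, so $W_0$ is also closed in $M$. Connectedness of $M$ then gives $W_0=M$ whenever $W\neq\emptyset$. Otherwise $f\equiv0$. In both cases $|{\bf H}|$ is constant.

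\emph{Main obstacle.} The main subtle point is the degree condition in Lemma \ref{lresultant}(2) at $y=1$. It is needed to conclude from coprimality of the specialized polynomials that the resultant is not the zero polynomial. For $m\neq4$ I would verify it directly from $A_3(1)=2m(-3m^2+4m-1)\neq0$ and from the factorized form of $P_{1,m}(1,u)$, whose leading coefficient is nonzero. For $m=4$ the same check is done on the coefficients of $\mathcal{E}_0(1,u)$ and $\mathcal{E}_1(1,u)$, which are displayed in \eqref{E0} and \eqref{E1} and have full degree.
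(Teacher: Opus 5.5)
Your proposal is correct and follows the paper's proof essentially step for step. You rule out $U$ by the resultant argument, dividing out the factor $u+2$ when $m=4$, and arrive at the contradiction $v<0$; you then pass from $df=0$ on $\{f>0\}$ to constancy of $f$ by a connected-component argument on $\{f>0\}$. Your explicit check that the leading coefficients in $u$ do not vanish at $y=1$ (e.g.\ $A_3(1)=-2m(3m-1)(m-1)$, and the full degrees of $P_{1,m}(1,u)$, $\mathcal{E}_0(1,u)$, $\mathcal{E}_1(1,u)$) verifies the hypothesis of Lemma \ref{lresultant}(2), which the paper uses only implicitly.
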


\begin{proof}
Assume that $U$ is nonempty. We can take a point $p\in U$. 
On a sufficiently small neighborhood of $p$, the preceding construction produces functions $y(\rho)$, $u(\rho)$, and $v(\rho)$ satisfying \eqref{y'}--\eqref{v'}, \eqref{Fm}, $y>1$, and $v>0$ along an integral curve of $e_1$. 
The argument above leads to a contradiction. Hence $U$ is empty and $df=0$ on $\{f>0\}$. 

Let $\Omega$ be a connected component of $\{f>0\}$. Then $f$ is constant on $\Omega$, say $c_0(>0)$. If $p\in \overline\Omega$, $f(p)=c_0$, because $f$ is continuous. 
A sufficiently small connected neighborhood of $p$ contained in $\{f>0\}$ meets $\Omega$. 
Therefore it is contained in $\Omega$. Thus $p\in \Omega$, and hence $\Omega$ is also closed. 
Since $M$ is connected, either $f\equiv0$ or $f\equiv c_0(>0)$. 
Therefore $|{\bf H}|$ is constant.  
\end{proof}

We use the following classical result on pseudo-umbilical submanifolds of codimension two with positive constant mean curvature in Euclidean spaces.

\begin{theorem}[\cite{Chen1971a,Chen1971b}]\label{tChen}
Let ${\bf x}:M^m\rightarrow\mathbb{E}^{m+2}$ $(m\geq2)$ be a connected pseudo-umbilical submanifold in the Euclidean space $\mathbb{E}^{m+2}$ with positive constant mean curvature $f=|{\bf H}|$. Then there exist $\tilde c\in\mathbb{E}^{m+2}$ and $R>0$ such that 
\[
{\bf x}(M)\subset\mathbb{S}^{m+1}(\tilde c,R),
\]
and $M$ is minimal in this hypersphere.
Moreover, 
\[
R=\frac{1}{f},\qquad \nabla^\perp{\bf H}=0.
\]
\end{theorem}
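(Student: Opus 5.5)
The plan is the classical argument for Chen's theorem, using the pseudo-umbilical condition together with the constancy of $f$.

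\emph{Step 1: $\nabla^\perp\xi=0$.} Write ${\bf H}=f\xi$ with $f>0$ constant and complete $\xi$ to a normal frame $\{\xi,\nu\}$ as in the preliminaries. The $\xi$-component of the Codazzi equation, \eqref{xicomp}, becomes
\[
\alpha(X)A_\nu Y-\alpha(Y)A_\nu X=0 .
\]
Suppose $\alpha\neq0$ at some point and take $X$ with $\alpha(X)\ne0$. For every $Y\in{\rm Ker}\,\alpha$ this gives $A_\nu Y=0$, and then $A_\nu X=\frac{\alpha(X)}{\alpha(Y)}A_\nu Y$-type relations force $A_\nu X$ to lie in the line spanned by $X$. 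Trace-freeness then gives $A_\nu=0$. This alone does not contradict $\alpha\neq0$, so I would argue differently.

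\emph{Alternative for Step 1.} Differentiate the position vector directly. With ${\bf x}$ the immersion and $\bar\nabla$ flat, the Weingarten formula gives
\[
\bar\nabla_X({\bf x}+\tfrac{1}{f}\xi)=X-\tfrac1f A_\xi X+\tfrac1f\nabla^\perp_X\xi=\tfrac{\alpha(X)}{f}\nu .
\]
So I need $\alpha=0$. Chen's original proof in codimension two shows $\nabla^\perp\xi=0$ via \eqref{xicomp} and \eqref{nucomp}. From \eqref{xicomp} with $df=0$, if $\alpha\ne0$ on an open set, then $A_\nu=0$ there (by the argument above, using $m\ge2$). Equation \eqref{nucomp} then gives $f\{\alpha(Y)X-\alpha(X)Y\}=0$. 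Choosing $X\perp Y$ forces $\alpha=0$, which is a contradiction. Hence $\alpha=0$ on a dense set, and therefore everywhere, i.e.\ $\nabla^\perp\xi=0$.

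\emph{Step 2: the center.} Set $\tilde c={\bf x}+\frac1f\xi$. By Step 1, $\bar\nabla_X\tilde c=0$, so $\tilde c$ is constant on the connected manifold $M$. Then $|{\bf x}-\tilde c|=\frac1f=R$, so ${\bf x}(M)\subset\mathbb{S}^{m+1}(\tilde c,R)$. The unit normal of the sphere along $M$ is $\pm\xi$. The remaining normal direction $\nu$ is tangent to the sphere, and the mean curvature vector of $M$ in the sphere is the $\nu$-component of ${\bf H}$, which vanishes. So $M$ is minimal in the hypersphere.

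Finally, $\nabla^\perp{\bf H}=f\nabla^\perp\xi=0$. The main obstacle is Step 1: deducing $A_\nu=0$ from \eqref{xicomp} wherever $\alpha\neq0$, and then combining it with \eqref{nucomp}.
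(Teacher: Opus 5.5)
Your proof is correct. The paper gives no proof of this statement; it simply quotes Chen \cite{Chen1971a,Chen1971b}. What you supply is a self-contained version of the classical argument, built from the paper's own identities:
\begin{itemize}
\item with $df=0$, the $\xi$-part of Codazzi \eqref{xicomp} forces $A_\nu=0$ wherever $\alpha\neq0$ (this is the $e_1(f)=0$ analogue of Lemma~\ref{l5.3}, which gives $b=0$);
\item the $\nu$-part \eqref{nucomp} then forces $f\alpha=0$;
\item the constancy of ${\bf x}+\frac1f\xi$ produces the sphere of radius $\frac1f$.
\end{itemize}
What this buys over the citation is independence from Chen's setup, including his convention that ${\bf H}$ is nowhere zero, which is automatic here since $f>0$ is constant.

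A few points should be tidied.

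\emph{The pointwise step.} Your justification of $A_\nu=0$ at a point with $\alpha\neq0$ is muddled: $A_\nu X$ need not lie on the line through $X$ for general $X$. The correct argument is that $A_\nu$ kills the hyperplane ${\rm Ker}\,\alpha$. By symmetry it therefore takes values in $({\rm Ker}\,\alpha)^\perp=\mathbb{R}\alpha^\sharp$, so $A_\nu X=k\,\alpha(X)\,\alpha^\sharp$ for some $k$. Its trace is $k|\alpha|^2$, and trace-freeness gives $k=0$.

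\emph{The density detour.} It is valid but unnecessary. Since $|\alpha|=|\nabla^\perp\xi|$ is a globally defined continuous function, the set $\{\alpha\neq0\}$ is open. Applying \eqref{nucomp} on it shows directly that it is empty.

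\emph{Where \eqref{xicomp} and \eqref{nucomp} hold.} The paper states these identities only on $U$, and $U$ is empty in your situation. You should remark that their derivation uses only $f>0$, pseudo-umbilicity and codimension two, so they hold on all of $M$ here.

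\emph{The false start.} Delete the abandoned first paragraph of Step~1; the ``Alternative'' paragraph is the actual proof.
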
 

\begin{remark}
In Chen's original paper \cite{Chen1971a}, a pseudo-umbilical submanifold is defined under the assumption that the mean curvature vector is nowhere zero. 
\end{remark}

Finally, we show Theorem \ref{main}.

\begin{proof}[Proof of Theorem \ref{main}]
By Proposition \ref{pkey}, $f$ is constant. 
If $f\equiv0$, then $M$ is minimal. We consider the non-minimal case. 

By Theorem \ref{tChen}, we have $\Delta^\perp {\bf H}=0$. Substituting this into \eqref{plb}, we have
\[
f^2=-\frac{\lambda}{m}.
\]
If $\lambda\geq0$, we have a contradiction. Hence $\lambda<0$. 
Therefore, 
\[
R=\frac{1}{f}=\sqrt{-\frac{m}{\lambda}},
\]
and $M$ is minimal in the hypersphere 
\[
\mathbb{S}^{m+1}\left(\tilde c, \sqrt{-\frac{m}{\lambda}}\right)\subset \mathbb{E}^{m+2}.
\]
This completes the proof of Theorem \ref{main}.
\end{proof}

\section{Proof of Theorem \ref{sub}}

In this section, we prove Theorem \ref{sub}. We proved the result for $c=0$ in the previous section. In the following, assume that $c\neq0$. 
In this case, since $\lambda=0$, we obtain $\beta=1$.
First, we show that the set $U$ is empty. We divide the proof into two cases.

\subsection{The case $m\neq4$}
We evaluate $P_{0,m}(y,u)$ at $y=1$. By \eqref{P0m},
\begin{align*}
P_{0,m}(1,u)
&=-(m-1)
\{
2m(3m-1)u^3+2m(17m-6)u^2\\
&\qquad\qquad\qquad+2(29m^2-13m+4)u+6m(5m-4)
\}.
\end{align*}
By \eqref{P1m2},
\begin{align*}
P_{1,m}(1,u)
&=-4(m-1)^2
\{
3m(3m-1)u^2+2m(17m-6)u+29m^2-13m+4
\}\\
&\qquad\qquad\qquad\times
(u+2)\{(2m-1)u+3m\}.
\end{align*}

For convenience, set 
\begin{align*}
\mathcal{C}_m
&=2m(3m-1)u^3+2m(17m-6)u^2\\
&\qquad\qquad\qquad+2(29m^2-13m+4)u+6m(5m-4),\\
\mathcal{D}_m
&=(u+2)\{(2m-1)u+3m\}\mathcal{Q}_m,
\end{align*}
where
\begin{align*}
\mathcal{Q}_m
&=3m(3m-1)u^2+2m(17m-6)u+29m^2-13m+4.
\end{align*}

With this notation,
\begin{align*}
P_{0,m}(1,u)
&=-(m-1)\mathcal{C}_m(u),\\
P_{1,m}(1,u)
&=-4(m-1)^2\mathcal{D}_m(u).
\end{align*}

\begin{lemma}\label{subl14.3}
For $m\geq3$ with $m\neq4$, the polynomials $\mathcal{C}_m$ and $\mathcal{D}_m$ are relatively prime in $\mathbb{R}[u]$.
\end{lemma}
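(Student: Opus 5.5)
The proof will follow the same scheme as Lemma \ref{l14.3}. Since $\mathcal{D}_m=(u+2)\{(2m-1)u+3m\}\mathcal{Q}_m$, it suffices to check three things: the two linear factors do not divide $\mathcal{C}_m$, and $\mathcal{C}_m$ and $\mathcal{Q}_m$ have no nontrivial common factor in $\mathbb{R}[u]$.

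\emph{The linear factors.} First evaluate $\mathcal{C}_m$ at $u=-2$. Collecting coefficients of $m^2$, $m$ and the constant term gives
\[
\mathcal{C}_m(-2)=-16m(3m-1)+8m(17m-6)-4(29m^2-13m+4)+6m(5m-4)=2m^2-4m-16=2(m-4)(m+2),
\]
which is nonzero for $m\neq4$. This is the same value as in the Euclidean case, and it is the only place where $m\neq4$ enters. Next evaluate $(2m-1)^3\,\mathcal{C}_m\bigl(-\tfrac{3m}{2m-1}\bigr)$. This is a polynomial in $m$, and I expect it to be a nonzero monomial-type expression, as in Lemma \ref{l14.3}. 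If instead it is a genuine polynomial, I would show it has a fixed sign for $m\geq3$ by substituting $m=n+3$ and checking that all coefficients have the same sign.

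\emph{The quadratic factor.} Observe that
\[
\frac{d}{du}\mathcal{C}_m=6m(3m-1)u^2+4m(17m-6)u+2(29m^2-13m+4)=2\mathcal{Q}_m.
\]
So a common factor of $\mathcal{C}_m$ and $\mathcal{Q}_m$ exists if and only if $\mathcal{C}_m$ has a repeated root, that is, if and only if the discriminant of the cubic $\mathcal{C}_m$ vanishes. I would verify this concretely by the Euclidean-algorithm step used before. Write
\[
3(3m-1)\,\mathcal{C}_m=\{2(3m-1)u+\kappa_m\}\mathcal{Q}_m-(\mathcal{A}_mu+\mathcal{B}_m),
\]
where $\kappa_m$ is chosen to kill the $u^2$ term, and $\mathcal{A}_m,\mathcal{B}_m$ are explicit polynomials in $m$. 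If $\mathcal{A}_m\neq0$ for $m\geq3$ (to be checked by writing it as a sum of positive terms), any common factor must be a multiple of $\mathcal{A}_mu+\mathcal{B}_m$. It then remains to show
\[
\mathcal{A}_m^2\,\mathcal{Q}_m\bigl(-\mathcal{B}_m/\mathcal{A}_m\bigr)\neq0,
\]
which is an explicit polynomial $\mathcal{H}(m)$ in $m$, equal up to a factor to the discriminant of $\mathcal{C}_m$.

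\emph{Expected obstacle.} The main difficulty is proving $\mathcal{H}(m)\neq0$ for all integers $m\geq3$, $m\neq4$. I would first evaluate $\mathcal{H}(3)$ directly. Then I would substitute $m=n+4$ (or $m=n+5$), expand, and hope that all coefficients are positive, as happened in Lemma \ref{l14.3}. If a few coefficients are negative, I would absorb them into neighbouring positive terms using $n\geq0$, or check finitely many small $m$ by hand and bound the rest by the leading term. Once these checks are done, $\mathcal{C}_m$ is coprime to each factor of $\mathcal{D}_m$, and hence to $\mathcal{D}_m$.
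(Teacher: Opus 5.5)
Your proposal is correct and is essentially the paper's proof: the same two evaluations at $u=-2$ and $u=-\frac{3m}{2m-1}$, the same Euclidean-division step of $\mathcal{C}_m$ by $\mathcal{Q}_m$ leaving a linear remainder $\mathcal{A}_m u+\mathcal{B}_m$ (the paper clears denominators with $9(3m-1)$ rather than your $3(3m-1)$), and the same positivity argument for $\mathcal{H}(m)$. The remaining checks do come out as you hope: $(2m-1)^3\mathcal{C}_m(-\frac{3m}{2m-1})=-6m^3(m+1)^2$, $\mathcal{A}_m=4(28m^3-39m+12)>0$ in the paper's normalization, and $\mathcal{H}(n+3)$ already has all positive coefficients, so no separate small cases are needed; your remark that $\mathcal{Q}_m=\frac12\mathcal{C}_m'$, so that the question is whether $\mathcal{C}_m$ has a repeated root, is correct and explains why this step appears, although the paper does not use it.
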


\begin{proof}
We first confirm that $\mathcal{C}_m(-2)\neq0$ and $\mathcal{C}_m\Big(-\frac{3m}{2m-1}\Big)\neq0$.
By the definition of $\mathcal{C}_m$, we have
\[
\mathcal{C}_m(-2)=2(m-4)(m+2)\neq0
\]
and
\[
\mathcal{C}_m\Big(-\frac{3m}{2m-1}\Big)=-\frac{6m^3(m+1)^2}{(2m-1)^3}\neq0.
\]
Hence it remains to show that $\mathcal{C}_m$ and $\mathcal{Q}_m$ are relatively prime in $\mathbb{R}[u]$.

A long but straightforward computation shows that 
\[
\mathcal{C}_m(u)=
\frac{1}{9(3m-1)}
\Big(
\{6(3m-1)u+2(17m-6)\}\mathcal{Q}_m(u)-(\mathcal{A}_mu+\mathcal{B}_m)
\Big),
\]
where 
\begin{align*}
\mathcal{A}_m
&=4(28m^3-39m+12),\\
\mathcal{B}_m
&=4(44m^3+32m^2+19m-12).
\end{align*}
If $\mathcal{C}_m$ and $\mathcal{Q}_m$ have a nontrivial common factor in $\mathbb{R}[u]$, then the identity above implies that this common factor divides $\mathcal{A}_mu+\mathcal{B}_m$.
Since 
\[
\mathcal{A}_m=4m(28m^2-39)+48>0\qquad (m\geq3),
\]
the common factor is a constant multiple of $\mathcal{A}_mu+\mathcal{B}_m$.
Thus
\[
\mathcal{Q}_m\Big(-\frac{\mathcal{B}_m}{\mathcal{A}_m}\Big)=0.
\]
However, we have 
\begin{align*}
\mathcal{Q}_m\Big(-\frac{\mathcal{B}_m}{\mathcal{A}_m}\Big)
=-\frac{(3m-1)\mathcal{H}(m)}{\mathcal{A}_m^2}\neq0,
\end{align*}
where 
\begin{align*}
\mathcal{H}(m)
&=576\Big\{
16m^7+64m^6-40m^5-596m^4\\
&\qquad\qquad-402m^3+561m^2-168m+16
\Big\}.
\end{align*}
In fact, if $m=n+3$, then 
\begin{align*}
\mathcal{H}(n+3)
&=576\Big\{16n^7+400n^6+4136n^5+22564n^4\\
&\qquad\qquad+68766n^3+113367n^2+86736n+17359
\Big\}>0,
\end{align*}
for $n\geq0$, that is, for $m\geq3$.
Thus $\mathcal{C}_m$ and $\mathcal{D}_m$ are relatively prime in $\mathbb{R}[u].$
\end{proof}

For fixed $m\geq3$ with $m\neq4$, let 
\[
\mathcal{R}_m(y)={\rm Res}_u(P_{0,m}(y,u),P_{1,m}(y,u)).
\]
By Lemmas \ref{lresultant} and \ref{subl14.3}, $\mathcal{R}_m(1)\neq0$.
Hence $\mathcal{R}_m$ is not the zero polynomial. Therefore Lemma \ref{lYconst} shows that $y$ is constant on the interval of $\rho$ under consideration.

\subsection{The case $m=4$}

It remains to consider the case $m=4$. In this case, $P_{0,m}$ and $P_{1,m}$ have the common factor $u+2$. In fact, by \eqref{P0m} and \eqref{P1m2}, we have
\begin{align*}
P_{0,4}
&=8(u+2)\mathcal{E}_0(y,u),\\
P_{1,4}
&=-16(u+2)\mathcal{E}_1(y,u),
\end{align*}
where
\begin{align*}
\mathcal{E}_0
&=y^2(16y-49)u^2+2y(8y^2+13y-81)u+8(7y^2-9y-7),\\
\mathcal{E}_1
&=3y^2(128y^3-856y^2+1225y+196)u^3\\
&\quad+12y(64y^4-216y^3-565y^2+1503y+162)u^2\\
&\quad+4(64y^5+608y^4-4239y^3+2477y^2+5908y+168)u\\
&\quad+16(56y^4-23y^3-987y^2+1432y+224).
\end{align*}

In this case, $u+2$ does not vanish on the interval under consideration.
In fact, since $y>1$ and $v>0$,
\[
F_4(y,-2,v)=-3v\neq0.
\]
Therefore, $P_{0,4}=P_{1,4}=0$ is equivalent to 
\[
\mathcal{E}_0(y,u)=\mathcal{E}_1(y,u)=0.
\]
At $y=1$,
\begin{align}
\mathcal{E}_0(1,u)
&=-3(11u^2+40u+24),\label{subE0}\\
\mathcal{E}_1(1,u)
&=9(7u+12)(33u^2+124u+104)\label{subE1}.
\end{align}
We claim that these two polynomials are relatively prime in $\mathbb{R}[u]$.

Substituting $u=-\frac{12}{7}$ into $\mathcal{E}_0$, we have
\[
\mathcal{E}_0\Big(1,-\frac{12}{7}\Big)
=\frac{1800}{49}\neq0,
\]
which implies that $7u+12$ is relatively prime to $\mathcal{E}_0(1,u)$.

It remains to consider 
\[
33u^2+124u+104=-3(-11u^2-40u-24)+4(u+8).
\]
Thus any common factor of $\mathcal{E}_0(1,u)$ and $33u^2+124u+104$ must divide $u+8$. If such a common factor is nontrivial, it is a constant multiple of $u+8$. However,
\[
\mathcal{E}_0\Big(1,-8\Big)
=-1224\neq0.
\]
Therefore $\mathcal{E}_0(1,u)$ and $\mathcal{E}_1(1,u)$ are relatively prime in $\mathbb{R}[u]$.

Set 
\[
\mathcal{R}_4(y)={\rm Res}_u(\mathcal{E}_0(y,u),\mathcal{E}_1(y,u)).
\]
By \eqref{subE0} and \eqref{subE1} and Lemma \ref{lresultant}, $\mathcal{R}_4(1)\neq0$. Thus $\mathcal{R}_4$ is not the zero polynomial. Lemma \ref{lYconst} gives that $y$ is constant on the interval of $\rho$ under consideration. 

\subsection{Final argument}
We have shown that, for $m\geq3$, $y$ is constant on every connected interval of $\rho$ on which the normalized variables are defined.
Hence $y'=0$. By \eqref{y'} and $y>1$, we have $u=-1$. 
By \eqref{vtFm}, $v$ is constant. Hence by \eqref{v'},
\[
0=v'=2v
\Big\{
-\frac{m-y}{m}+4-y
\Big\}.
\]
Thus
\[
y=\frac{3m}{m-1}.
\]
Substituting $u=-1$ and $y=\frac{3m}{m-1}$ into \eqref{Fm}, we have
\[
v=-\frac{m(2m-5)}{(m-1)^2}<0,
\]
which contradicts $v>0$. Therefore there exists no solution $(y(\rho),u(\rho),v(\rho))$ of 
\eqref{y'}--\eqref{v'} satisfying \eqref{Fm}, $y>1$, and $v>0$.

The preceding argument shows that the mean curvature is constant.

\begin{proposition}\label{subpkey}
Let $M^m$ be a connected pseudo-umbilical $0$-biminimal submanifold in $N^{m+2}(c)$ with $m\geq3$. Then $|{\bf H}|$ is constant.
\end{proposition}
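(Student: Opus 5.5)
The plan mirrors the proof of Proposition~\ref{pkey}, with the resultant computations of this section replacing those of the Euclidean case. First, assume for contradiction that $U$ is nonempty and fix $p\in U$. The case $c=0$ is already Proposition~\ref{pkey} with $\lambda=0$, so we may assume $c\neq0$.

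On a small neighborhood of $p$, the constructions of Sections 2 and 3 apply. The adapted frames of Lemma~\ref{l5.3}, the warped product structure and Proposition~\ref{p8.1} give the ODE system in the parameter $\rho=\ln f$ along an integral curve of $e_1$. Since $\lambda=0$ we have $\mu=-mc\neq0$ and $\beta=1$. Then $d_m=m-1>0$, so \eqref{vtFm} is available. Lemma~\ref{l13.1} then gives functions $y(\rho),u(\rho),v(\rho)$ satisfying \eqref{y'}--\eqref{v'} and \eqref{Fm}, with $y>1$ and $v>0$. By Lemma~\ref{l14.1}, $P_{0,m}(y,u)=P_{1,m}(y,u)=0$ along the curve.

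Next I would show that $y$ is constant, using the two resultant computations just carried out.
\begin{itemize}
\item For $m\neq4$, Lemma~\ref{subl14.3} and Lemma~\ref{lresultant} give $\mathcal{R}_m(1)\neq0$, since the specialization at $y=1$ does not drop degrees: the leading coefficients are nonzero multiples of $(m-1)$.
\item For $m=4$, we divide out the factor $u+2$. This is legitimate because $F_4(y,-2,v)=-3v\neq0$. The relative primality of $\mathcal{E}_0(1,u)$ and $\mathcal{E}_1(1,u)$ then gives $\mathcal{R}_4\not\equiv0$.
\end{itemize}
In both cases Lemma~\ref{lYconst} shows that $y$ is constant on the relevant interval.

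The final argument then gives a contradiction:
\begin{itemize}
\item $y'=0$ together with $y>1$ forces $u=-1$.
\item \eqref{vtFm} then makes $v$ constant.
\item \eqref{v'} forces $y=3m/(m-1)$.
\item \eqref{Fm} then gives $v=-m(2m-5)/(m-1)^2<0$, contradicting $v>0$.
\end{itemize}
Hence $U=\emptyset$, i.e.\ $df=0$ on $\{f>0\}$.

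The last step is topological, exactly as in Proposition~\ref{pkey}. Let $\Omega$ be a connected component of the open set $\{f>0\}$. Then $f\equiv c_0>0$ on $\Omega$. By continuity, every point of $\overline\Omega$ has $f=c_0>0$, so it has a connected neighborhood inside $\{f>0\}$ meeting $\Omega$, and hence lies in $\Omega$. Thus $\Omega$ is open and closed. Since $M$ is connected, either $f\equiv0$ or $f\equiv c_0$, so $f$ is constant.

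The main obstacle is the legitimacy of the local reduction. One must make sure that along the chosen integral curve, $\rho$ is a valid parameter, which holds since $d\rho/d\tau=pq\neq0$. One must also check that $\kappa$ is expressed through $u,v$ via \eqref{zt} with $\beta=1$. This expression uses $\ell'=-2\ell$ and $\kappa'=-2\kappa$, so that $\kappa/\ell=-\beta/m$ is constant. Once these points are granted, the remaining work is the algebra already displayed above.
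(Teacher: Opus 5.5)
Your proposal is correct and is essentially the paper's own proof. You reduce to $c\neq0$ (so $\beta=1$ and $d_m=m-1$), use the resultant computations at $y=1$ (after removing the factor $u+2$ when $m=4$) to force $y$ to be constant, reach $v=-m(2m-5)/(m-1)^2<0$, and conclude with the open--closed argument on $\{f>0\}$. One minor correction to your closing remark: $\kappa/\ell=c/\mu=-\beta/m$ holds directly by definition, and the expression for $\kappa$ in \eqref{zt} comes from the formula for $\ell$ in Lemma~\ref{l11.1}, not from $\ell'=-2\ell$.
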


\begin{proof}
Assume that $U$ is nonempty. We can take a point $p\in U$. 
On a sufficiently small neighborhood of $p$, the preceding construction produces functions $y(\rho)$, $u(\rho)$, and $v(\rho)$ satisfying \eqref{y'}--\eqref{v'}, \eqref{Fm}, $y>1$, and $v>0$ along an integral curve of $e_1$. 
The argument above leads to a contradiction. Hence $U$ is empty and $df=0$ on $\{f>0\}$. 

Let $\Omega$ be a connected component of $\{f>0\}$. Then $f$ is constant on $\Omega$, say $c_0(>0)$. If $p\in \overline\Omega$, $f(p)=c_0$, because $f$ is continuous. 
A sufficiently small connected neighborhood of $p$ contained in $\{f>0\}$ meets $\Omega$. 
Therefore it is contained in $\Omega$. Thus $p\in \Omega$, and hence $\Omega$ is also closed. 
Since $M$ is connected, either $f\equiv0$ or $f\equiv c_0(>0)$. 
Therefore $|{\bf H}|$ is constant.  
\end{proof}

Once we obtain that $|{\bf H}|$ is constant, we obtain Theorem \ref{sub}.
When $c<0$, Proposition~2.2 in \cite{Oniciuc2002} shows that $M$ is minimal. 
When $c>0$, if $M$ is not minimal, then 
$\nabla^\perp{\bf H}=0$ and $|{\bf H}|^2=c>0$. 
In the unit sphere, every such non-minimal submanifold is minimal in a small hypersphere $\mathbb{S}^{m+1}\left(\frac{1}{\sqrt{2}}\right)\subset \mathbb{S}^{m+2}$ (cf. \cite{Oniciuc2003}, \cite{BMO2008}).

This completes the proof of Theorem \ref{sub}.

\quad\\

\subsection*{Acknowledgements}~\\
The author would like to express his deep gratitude to Cezar Oniciuc for his useful comments on the first draft.


\begin{thebibliography}{99} 

\bibitem{AM2013} K. Akutagawa and S. Maeta, 
\textit{Biharmonic properly immersed submanifolds in Euclidean spaces}, 
Geom. Dedicata {\bf 164} (2013), 351--355. 

\bibitem{BMO2008} A. Balmu\c{s}, S. Montaldo, and C. Oniciuc,
\textit{Classification results for biharmonic submanifolds in spheres},
Israel J. Math. {\bf 168} (2008), 201--220.

\bibitem{BMO2013} A. Balmu\c{s}, S. Montaldo, and C. Oniciuc, 
\textit{Biharmonic PNMC submanifolds in spheres}, 
Ark. Mat. {\bf 51} (2013), 197--221. 

\bibitem{Brinkmann1925} H. W. Brinkmann,
\textit{Einstein spaces which are mapped conformally on each other}, 
Math. Ann. \textbf{94} (1925), 119--145.

\bibitem{CMO2002} R. Caddeo, S. Montaldo, and C. Oniciuc,
\textit{Biharmonic submanifolds in spheres},
Israel J. Math. {\bf 130} (2002), 109--123.

\bibitem{Chen1971a} B.-Y. Chen, 
\textit{Minimal hypersurfaces in an $m$-sphere}, 
Proc. Amer. Math. Soc. {\bf 29} (1971), 375--380. 

\bibitem{Chen1971b} B.-Y. Chen, 
\textit{On the mean curvature of submanifolds of Euclidean space}, 
Bull. Amer. Math. Soc. {\bf 77} (1971), 741--743. 

\bibitem{Chen1976} B.-Y. Chen, 
\textit{Total mean curvature of immersed surfaces in $E^m$}, 
Trans. Amer. Math. Soc. {\bf 218} (1976), 333--341. 

\bibitem{Chen1988} B.-Y. Chen, 
\textit{Some open problems and conjectures on submanifolds of finite type},
Michigan State University, (1988~version).


\bibitem{Chen2019} B.-Y. Chen, 
\textit{Chen's biharmonic conjecture and submanifolds with parallel normalized mean curvature vector}, 
Mathematics {\bf 7} (2019), no.~8, Article 710. 

\bibitem{Chen2025} B.-Y. Chen, 
\textit{Recent developments in Chen's biharmonic conjecture and some related topics}, 
Mathematics {\bf 13} (2025), no.~9, Article 1417. 

\bibitem {Chen-Ishikawa-1} B.-Y.~Chen and S.~Ishikawa, 
    {\it Biharmonic surfaces in pseudo-Euclidean spaces}, 
    Memoirs Fac.\ Sci., Kyushu Univ., Ser.~A {\bf 45} (1991), 323--347. 

\bibitem {Chen-Ishikawa-2} B.-Y.~Chen and S.~Ishikawa, 
    {\it Biharmonic pseudo-Riemannian submanifolds in pseudo-Euclidean spaces}, 
    Kyushu J.\ Math.\ {\bf 52} (1998), 167--185. 

\bibitem{CL1972} B.-Y. Chen and G.~D. Ludden, 
\textit{Rigidity theorems for surfaces in Euclidean space}, 
Bull. Amer. Math. Soc. {\bf 78} (1972), 72--73. 

\bibitem{CLO2005} D.~A. Cox, J. Little, and D. O'Shea, 
\textit{Using Algebraic Geometry}, 2nd ed., Graduate Texts in Mathematics, Vol.~185, Springer, New York, 2005. 

\bibitem{Defever1998} F. Defever, 
\textit{Hypersurfaces of ${\bf E}^4$ with harmonic mean curvature vector}, 
Math. Nachr. {\bf 196} (1998), 61--69. 

\bibitem{Dimitric1989} I. Dimitri\'c, 
\textit{Quadric representation and submanifolds of finite type}, 
Ph.D. Thesis, Michigan State University, 1989. 


\bibitem{EL1983} J. Eells and L. Lemaire, 
\textit{Selected topics in harmonic maps}, 
CBMS Regional Conference Series in Mathematics, No. 50, Amer. Math. Soc., Providence, RI, 1983. 

\bibitem{Fetcu2012} D. Fetcu, 
\textit{Surfaces with parallel mean curvature vector in complex space forms}, 
J. Differential Geom. {\bf 91} (2012), no.~2, 215--232. 

\bibitem{FHZ2021} Y. Fu, M.~C. Hong, and X. Zhan, 
\textit{On Chen's biharmonic conjecture for hypersurfaces in $\mathbb{R}^5$}, 
Adv. Math. {\bf 383} (2021), Paper No. 107697, 28 pp. 

\bibitem{FHZ2023} Y. Fu, M.~C. Hong, and X. Zhan, 
\textit{Biharmonic conjectures on hypersurfaces in a space form}, 
Trans. Amer. Math. Soc. {\bf 376} (2023), no.~12, 8411--8445. 

\bibitem{HV1995} T. Hasanis and T. Vlachos, 
\textit{Hypersurfaces in $E^4$ with harmonic mean curvature vector field}, 
Math. Nachr. {\bf 172} (1995), 145--169. 

\bibitem{IL2012} J. Inoguchi and J.-E. Lee,
\textit{Biminimal curves in 2-dimensional space forms},
Commun. Korean Math. Soc. {\bf 27} (2012), no.~4, 771--780.

\bibitem{Jiang1986} G. Y. Jiang, \textit{2-Harmonic maps and their first and second variational formulas}, 
Chin. Ann. Math. Ser. A {\bf 7} (1986), 389--402. 

\bibitem{Jiang1987} G. Y. Jiang, \textit{Some non-existence theorems of 2-harmonic isometric immersions into Euclidean spaces}, 
Chin. Ann. Math. Ser. A {\bf 8} (1987), 376--383. 

\bibitem{LM2008} E. Loubeau and S. Montaldo, 
\textit{Biminimal immersions}, 
Proc. Edinburgh Math. Soc. {\bf 51} (2008), 421--437. 

\bibitem{LO2016} E. Loubeau and C. Oniciuc,
\textit{Constant mean curvature proper-biharmonic surfaces of constant Gaussian curvature in spheres},
J. Math. Soc. Japan {\bf 68} (2016), no.~3, 997--1024.

\bibitem{Maeta2012} S. Maeta, 
\textit{Biminimal properly immersed submanifolds in the Euclidean spaces}, 
J. Geom. Phys. {\bf 62} (2012), 2288--2293. 

\bibitem{Maeta2014} S. Maeta, 
\textit{Properly immersed submanifolds in complete Riemannian manifolds}, 
Adv. Math. {\bf 253} (2014), 139--151. 

\bibitem{Maeta2021} S. Maeta,
\textit{Classification of generalized Yamabe solitons under vanishing conditions on the Weyl, Cotton, and Cao-Chen tensors},
arXiv:2107.05487 [math. DG].

\bibitem{Maeta2026} S. Maeta, 
\textit{Biharmonic rotational surfaces in the four-dimensional Euclidean space are minimal}, 
arXiv:2605.09587. 

\bibitem{MU2013} S. Maeta and H. Urakawa, 
\textit{Biharmonic Lagrangian submanifolds in K\"ahler manifolds}, 
Glasgow Math. J. {\bf 55} (2013), no.~2, 465--480. 

\bibitem{ONeill1983}
B. O'Neill,
\textit{Semi-Riemannian Geometry With Applications to Relativity},
Academic Press, New York, 1983.

\bibitem{Oniciuc2002}
C. Oniciuc,
\textit{Biharmonic maps between Riemannian manifolds},
An. Stiint. Univ. Al. I. Cuza Iasi Mat. (N.S.) {\bf 48} (2002), 237--248.

\bibitem{Oniciuc2003} C. Oniciuc, 
\textit{Tangency and Harmonicity Properties}, 
Ph.D. Thesis, Geometry Balkan Press, Bucharest, 2003.

\bibitem{Petersen2016}
P. Petersen,
{\it Riemannian Geometry}, 3rd ed.,
Graduate Texts in Mathematics, {\bf 171},  Springer, 2016.

\bibitem{Sasahara2010} T. Sasahara,
\textit{A classification result for biminimal Lagrangian surfaces
in complex space forms},
J. Geom. Phys. {\bf 60} (2010), no.~6--8, 884--895.

\bibitem{Smyth1973} B. Smyth, 
\textit{Submanifolds of constant mean curvature}, 
Math. Ann. {\bf 205} (1973), 265--280. 

\bibitem{ST2018} R. Ye\u{g}in \c{S}en and N.~C. Turgay, 
\textit{On biconservative surfaces in 4-dimensional Euclidean space}, 
J. Math. Anal. Appl. {\bf 460} (2018), no.~2, 565--581. 

\bibitem{Tashiro1965}Y. Tashiro, 
\textit{Complete Riemannian manifolds and some vector fields},
 Trans. Amer. Math. Soc. {\bf 117} (1965), 251--275.
 
\bibitem{YI1971} K. Yano and S. Ishihara, 
\textit{Submanifolds with parallel mean curvature vector}, 
J. Differential Geom. {\bf 6} (1971), 95--118. 

\bibitem{Yau1974} S.-T. Yau, 
\textit{Submanifolds with constant mean curvature}, 
Amer. J. Math. {\bf 96} (1974), 346--366.


     
     
     
\end{thebibliography}


\bibliographystyle{amsbook}

\end{document}